\long\def\symbolfootnote[#1]#2{\begingroup%
\def\thefootnote{\fnsymbol{footnote}}\footnote[#1]{#2}\endgroup}
\newcommand{\tr}{\ensuremath{{}^t\!}}
\newcommand{\F}{\mathbb{F}}
\newcommand{\GL}{\mathrm{GL}}
\newcommand{\Sp}{\mathrm{Sp}}
\newcommand{\pa}{\Lambda}
\newcommand{\e}{\epsilon}
\newcommand{\fqn}[1]{\mathbb{F}_{q^{#1}}}
\newcommand{\ssp}{\Lambda}
\newcommand{\fq}{\ensuremath{\mathbb{F}_q}}
\newcommand{\On}[1]{\mathrm{O}^{#1}}
\newcommand{\ep}{\epsilon}
\newcommand{\U}{\mathrm{U}}
\newcommand{\p}{\mathbf{P}}
\def\imod#1{\allowbreak\mkern10mu({\operator@font mod}\,\,#1)}
\newtheorem{theorem}{Theorem}[section]
\newtheorem{lemma}[theorem]{Lemma}
\newtheorem{corollary}[theorem]{Corollary}
\newtheorem{proposition}[theorem]{Proposition}
\newtheorem*{theorem*}{Theorem}
\theoremstyle{definition}
\numberwithin{equation}{section}
\newcommand{\ignore}[1]{}
\newcommand{\mynote}[1]{}
\newcommand{\id}{\mathrm{Id}}
\newcommand{\Z}{\mathbb Z}
\newcommand{\G}{\mathrm G}
\newcommand{\ord}{\mathrm{ord}}
\newcommand{\D}{\widetilde{D}}
\begin{document}
\setcounter{section}{0}
\title[Roots of identity]{Roots of identity in finite groups of Lie type}
\author{Saikat Panja}
\email{panjasaikat300@gmail.com}
\address{Harish-Chandra Research Institute- Main Building, Chhatnag Rd, Jhusi, Uttar Pradesh 211019, India}
\thanks{The author is supported by a PDF-Math fellowship from the Harish-Chandra Research Institute}
\date{\today}
\subjclass[2020]{20G40, 20D06}
\keywords{word maps, finite groups of Lie type, $M$-th root}
\begin{abstract}
    Given an integer $M\geq 2$, we deploy the generating function techniques to compute the number of $M$-th roots of identity in some of the well-known finite groups of Lie type, more precisely for finite general linear groups, symplectic groups, orthogonal groups of all types and unitary groups over finite fields of odd characteristics.
\end{abstract}
\maketitle
\section{Introduction}\label{sec:intro}
A significant amount of modern mathematics is centred around finding solutions 
to equations in objects belonging to different categories, for example, the category of
groups, the category of associative algebras, the category of Lie algebras \emph{etc.}. In the world of groups, these are known as \emph{word problems}. It has come to the consideration of many great mathematicians of the current century, especially after the settlement of the long-standing
Ore's conjecture (which states that every element of a finite non-abelian simple 
group is a commutator). Let us recall the word problem in the context of group theory in brief. Fix a free group $\mathscr{F}_n$ on $n$ generators. Then for any word $\omega\in \mathscr{F}_n$ and a group $G$, we get a map
$\widetilde{\omega}:G^n\longrightarrow G$, by means of evaluations. Several astonishing results have been established regarding these maps. A typical \emph{word problem} asks if the map induced by the given word is surjective on the group or not. The other related problem i,e, \emph{Waring-type problem} asks for the
existence of $N(\omega)\in \mathbb{N}$ such that $\langle \widetilde{\omega}(G)\rangle=\widetilde{\omega}(G)^{N(\omega)}$.
The most surprising result in this direction states that, for a given word 
$\omega$, there exists $N_\omega\in\mathbb{N}$ such that for all finite non-abelian simple groups of size greater or equal to $N_w$, we have $N(\omega)=3$, see
the great work \cite{Sha09Waring3} by Shalev. Later he together with Larsen and Tiep improved the result by proving $N(\omega)=2$, in \cite{LarsenShalev09} and 
\cite{LShTi11Waring2}.
Unfortunately the number $N(\omega)$ can not be reduced further. An easy example
arises out of the power map, which is induced by the word $\ omega=x^M\in\mathscr{F}_1$, for some integer $M\geq 2$. The Waring-type problems have also been studied in 
case of Lie groups and Chevalley groups in \cite{HuiLarsenShalev15}, unipotent algebraic groups 
in \cite{Larsen19unipotent}, residually finite groups in \cite{LarsenShalev18residual}, $p$-adic and Adelic groups in \cite{AvniShalev13}
 \emph{et cetera}.
For a survey of these results and further problems in
the context of group theory, we refer the reader to the excellent survey due
to Shalev \cite{Sha09Waring3} and the references therein.

There have been studies to find the number of elements which are images of the map $x^M$ in classical groups. This was done for the finite general linear groups in \cite{KuSi22}, for the finite symplectic and orthogonal 
groups in \cite{PaSi22} and for finite unitary groups in \cite{panja2023powers}.
The exceptional cases remain open. After knowing that an equation has a solution
in the concerned object, it is desirable to ask how many solutions exist in this case. Some authors also call them as the problem of finding fibres of a word map.
The enumeration of the solutions of $x^p=1$ 
has been carried out for symmetric and alternating groups
in many works like \cite{ChowlaHerstein52}, \cite{NiemeyerPraeger07}, \cite{KodaSato15}, \cite{Ishihara01} \emph{et cetera}. The problem of finding 
fibres of $x^M=1$ in some of the finite groups of Lie type has been studied by several authors, for example, proportions of elements in finite groups of Lie type have been estimated in \cite{NiePra10fnitegroups}, 
 algorithms to identify abundant $p$-singular elements in finite classical groups
have been studied in \cite{NiPr12algorithm}, Proportions of {$p$}-regular elements in finite classical groups have been studied in \cite{BabaiGuestPrWilson13}, abundant {$p$}-singular elements in finite classical groups have been studied in \cite{NiPoPr14} \emph{et ceetra}. In this 
paper we derive the generating functions 
for the probability of an element satisfying $x^M=1$, where $M$ is an integer greater than $2$. We call these elements to
be
\emph{roots of identity}. Note that if
an element is a root of identity, then so are all its conjugates. Thus we first find the
conjugacy class representatives which are $M$-th root of identity. In this regard, we brush through a brief description of the conjugacy classes in 
\cref{sec:conjcent}. Hereafter in \cref{sec:semsim-uni}, we first compute the 
roots of identity which are either semisimple or unipotent. Finally using the 
Jordan decomposition of an element, in \cref{sec:gen-fun}, we derive the generating functions
 for the number of $M$-th roots of identity in finite classical groups of Lie type; more precisely in the general linear group, the symplectic group, the orthogonal group and the unitary group. Finally in \cref{sec:calc-prime}, we derive an exact formula for the probability of a matrix to be $M$-th root of identity in the case of the general linear group and the symplectic group, where $M$ is an odd non-defining prime and the defining prime satisfies $q\equiv -1\pmod{M}$.
\subsection*{Running assumption} {\color{blue}Throughout the article it will be assumed that the finite fields under consideration are of odd characteristics.}
\subsection*{Notation} The set of units in $\Z/n\Z$ will be denoted by $\Z/n\Z^\times$. By the notation $e(n)$, we will always denote the order of $q$ in the multiplicative group
$\Z/n\Z^\times$. The Euler's totient function will be denoted by $\phi(n)$. We denote the set of irreducible monic polynomials over $\fq$ by $\Phi$. Let us also
fix the notation for the set of all partitions of $n$ to be $\pa^n$. By the symbol 
$\left(\dfrac{u}{q}\right)_i$, we denote the quantity $\left(1-\dfrac{u}{q}\right)\left(1-\dfrac{u}{q^2}\right)\ldots \left(1-\dfrac{u}{q^i}\right)$. For a partition $\lambda\vdash n$, the number of occurrences of $i$ will be denoted as 
$m_i(\lambda)$. The transpose of a partition $\lambda$ will be denoted as $\lambda'$. The number of partitions of a positive integer $n$ will be denoted 
as $\p(n)$ and those with lesser than or equal to equal to $m$ many (not necessarily distinct) parts will be denoted as 
$\p_m(n)$.

\section{The groups under consideration}\label{sec:conjcent}
In this section, we briefly recall the groups. Note that if an element is an $M$-th root of identity, then so are its conjugates. 
Hence we embark to find first the conjugacy classes which are $M$-th root of identity. This makes it necessary for us to know about the conjugacy classes and the centralizer of an element. We will follow the treatment of \cite{Green55},
\cite{Macdonald81}, \cite{wall63} and, \cite{Enn62}. For the case of
the general linear group, it is easy to find the conjugacy representative for
the conjugacy classes, but for other concerned groups, they are a little tricky. 
One may look into \cite{ta1}, \cite{ta2} and, \cite{ta3}, to handle the other cases. Without further delay let us start with the subsection where we talk about the general linear group.
\subsection{General linear group} Given a $n$-dimensional vector space $V$ over $\fq$, the set of all automorphisms of $V$ is denoted by $\GL(V)$. After
fixing an appropriate basis, $\GL(V)$ can be identified with $\GL_n(q)$. 
The conjugacy classes of $\GL_n(q)$ is determined by functions 
$\lambda:\Phi\longrightarrow\Lambda$ by $f\mapsto\lambda_f$ such that $\sum\limits_{f}\deg(f)\cdot|\lambda_f|=n$. The combinatorial data attached via this correspondence, for an element $\alpha\in \GL_n(q)$ will be denoted by $\Delta(\alpha)$. The centralizer size 
is given by
\begin{align*}
    \prod\limits_{\varphi\in\Delta(\alpha)}\left(q^{\deg \varphi\cdot \sum\limits_{i}(\lambda_{i,\varphi}')^2}\prod_{i\geq 1}\left(\dfrac{1}{q^{\deg\varphi}}\right)_{m_i(\lambda_\varphi)}\right).
\end{align*}
This quantity sometimes can be shortened using a general expression, which we recall from \cite[Theorem 2]{Ful99CycInd} below; 
\begin{align*}
    c_{\GL,\varphi, q}(\lambda)=\left(q^{\deg \varphi\cdot \sum\limits_{i}(\lambda_{i}')^2}\prod_{i\geq 1}\left(\dfrac{1}{q^{\deg\varphi}}\right)_{m_i(\lambda_\varphi)}\right).
\end{align*}
\subsection{Symplectic and orthogonal groups}
\subsubsection{Symplectic group}
Let $V$ be a vector space of dimension $2n$ over $\mathbb F_q$. There is a unique non-degenerate alternating bilinear form on $V$. We consider the form given by 
$$\left<(x_i)_{i=1}^{2n}, (y_j)_{j=1}^{2n} \right> = \sum_{j=1}^{n}x_jy_{2n+1-j}-\sum_{i=0 } ^{n-1}x_{2n-i}y_{i+1}.$$ 
The symplectic group is the subgroup of $\GL(V)$ consisting of those elements which preserve this alternating form on $V$. By fixing an appropriate basis, the matrix of the form is $J= \begin{pmatrix} 0 & \Pi_n\\ -\Pi_n & 0 \end{pmatrix}$ where 
$\Pi_n=\begin{pmatrix} 0 & 0 & \cdots & 0 & 1\\
0 & 0 & \cdots & 1 & 0\\ \vdots & \vdots & \reflectbox{$\ddots$} & \vdots & \vdots\\
1 & 0 & \cdots & 0 & 0 \end{pmatrix}$ and 
$$\Sp_{2n}(q)=\{A \in \GL_{2n}(q) \mid \tr AJA=J\}.$$ 
Since all alternating forms are equivalent over $\mathbb F_q$, the symplectic groups obtained with respect to different forms are conjugate within $\GL_{2n}( q)$. 
\subsubsection{Orthgonal Groups}
Let $V$ be an $m$-dimensional vector space over a finite field $\mathbb F_q$. Then there are at most two non-equivalent non-degenerate quadratic forms on $V$. The orthogonal group consists of elements of $\GL(V)$ which preserve a non-degenerate quadratic form $Q$. 

When $m=2n$ for some $n\geq 1$, up to equivalence there are two such forms denoted as $Q^+$ and $Q^-$. These are as follows. Fix $a\in \mathbb F_q$ such that $t^2 + t + a \in \fq[t]$ is irreducible. Then the two non-equivalent forms are given by
\begin{enumerate}
\item $Q^+(x_1, \ldots, x_m) = x_{1}x_{2} + x_3x_4+\cdots + x_{2n-1}x_{2n}$, and
\item $Q^-(x_1,\ldots, x_m)= x_1^2 + x_1x_2+ax_2^2 + x_3x_4+\cdots + x_{2n-1}x_{2n}$.
\end{enumerate}
The orthogonal group preserving $Q^+$ will be denoted as $\On{+}_m(q)$, and the orthogonal group preserving $Q^-$ will be denoted as $\On{-}_m(q)$.

When $m=2n+1$, for $q$ even there is only one (up to equivalence) quadratic form, namely 
$Q(x_1, \ldots, x_m)=x_1^2+\sum\limits_{i=1}^n x_{2i}x_{2i+1}$ and hence there is only one (up to conjugacy) orthogonal group. If $q$ is odd, then up to equivalence, there are two non-degenerate quadratic forms. But, these two forms give isomorphic orthogonal groups. We take $Q(x_1, \ldots, x_m)= x_1^2+\cdots + x_m^2$. Thus, in case $m=2n+1$, up to conjugacy, we have only one orthogonal group. This will be denoted as $\On{}(m, q)$. 

As it is common in literature, we will use the notation $\On{\epsilon}_m(q)$ to denote any of the orthogonal group above where $\epsilon \in \{~, +, -\}$. With respect to an appropriate basis, we will fix the matrices of the symmetric bilinear forms (associated to the quadratic forms $Q^{\epsilon}$) as follows: 
$$J_{0}=\begin{pmatrix} 0 & 0 & \Pi_n \\ 0 & \alpha & 0\\ \Pi_n & 0 & 0 
\end{pmatrix},  J_{+}=\begin{pmatrix} 0& \Pi_n\\ \Pi_n & 0
\end{pmatrix}, J_{-}=\begin{pmatrix} 0 & 0 & 0 & \Pi_{n-1}\\
0 & 1 & 0 & 0 \\ 0 & 0 & -\delta & 0 \\ \Pi_{n-1}& 0 & 0 & 0
\end{pmatrix}$$ 
where $\alpha\in\fq^\times,\delta\in\fq\setminus\fq^2$, and $\Pi_l =\begin{pmatrix}
0 & 0 & \cdots & 0 & 1\\ 0 & 0 & \cdots & 1 & 0\\
\vdots & \vdots & \reflectbox{$\ddots$} & \vdots & \vdots\\
1 & 0 & \cdots & 0 & 0 \end{pmatrix}$, an $l\times l$ matrix. Then, the orthogonal group in matrix form is 
$$\On{\e}_m( q) = \{A \in \GL_m(q)\mid \tr{A}J_{\e}A=J_{\e}\}.$$
Adapting the notations of \cite{FuNePr05book}, we define the type of an orthogonal space as follows.

\emph{The type of an orthogonal space} $(V,Q)$ of dimension $m$ is
\begin{align*}
\tau(V)=\begin{cases}
1 & \text{if } m \text{ is odd, }q\equiv 1\imod 4, Q\sim\sum x_i^2,\\
-1 & \text{if } m \text{ is odd, }q \equiv 1\imod 4, Q\sim b\sum x_i^2,\\
\iota^m & \text{if } m \text{ is odd, }q\equiv 3\imod 4, Q\sim\sum x_i^2,\\
(-\iota)^m & \text{if } m \text{ is odd, }q\equiv 3\imod 4, Q\sim b\sum x_i^2,
\end{cases}
\end{align*}
where $\iota\in\mathbb{C}$ satisfies $\iota^2=-1$, and $b\in\fq\setminus\fq^2$.
 More generally, when $V$ is orthogonal direct sum $V_1\oplus V_2 \oplus \cdots \oplus V_l$, the type is defined by $\tau(V)=\prod\limits_{i=1}^l\tau(V_i)$.


Now we describe the conjugacy classes and the centralizer of an element in 
finite symplectic and orthogonal groups. This is the classic work of \cite{wall63}.We recall briefly the results therein, which will be used further.
A \textbf{symplectic signed partition} is a partition of a number $k$, such that the odd parts 
have even multiplicity and even parts have a sign associated with it. The set
of all symplectic signed partitions will be denoted as $\ssp_{\Sp}$.
An \textbf{orthogonal signed partition} is a partition of a number $k$, such that all even parts have even multiplicity, and all odd parts have a sign associated with it. The set
of all orthogonal signed partitions will be denoted as $\ssp_{\On{}}$.
The \textit{dual} of a monic degree $r$ polynomial $f(x)\in k[x]$ satisfying $f(0)\neq 0$, is the polynomial given by $f^*(x)=f(0)^{-1}x^rf(x^{-1})$. 

According to \cite{wall63}, \cite{shinoda80}, the conjugacy classes of $\Sp_{2n}(q)$ are parameterized by the functions 
$\lambda:\Phi\rightarrow\pa^{2n}\cup\ssp_{\Sp}^{2n}$, where $\Phi$ denotes the set of all
 monic,  non-constant, irreducible polynomials, $\pa^{2n}$ is
 the set of all partitions 
of $1\leq k\leq 2n$ and $\ssp_{\Sp}^{2n}$ is the set of all symplectic
 signed partitions 
of $1\leq k\leq 2n$. Such a $\lambda$ represent a conjugacy class of $\Sp$
if and only if  (a) $\lambda(u)=0$, (b)
 $\lambda_{\varphi^*}=\lambda_\varphi$, (c)
 $\lambda_\varphi\in\ssp^n_{\Sp}$ iff $\varphi=u\pm 1$ (we distinguish this $\lambda$, by denoting it $\lambda^\pm$) and, (d)
 $\displaystyle\sum_{\varphi}|\lambda_\varphi|\textup{deg}(\varphi)=2n$.
Also from \cite{wall63}, \cite{shinoda80}, we find that a similar kind of statement is true for the groups $\On{\epsilon}_n(q)$. The conjugacy classes of $\On{\epsilon}_n(q)$ are parameterized by the functions 
$\lambda:\Phi\rightarrow\pa^{n}\cup\ssp_{\On{}{}}^{n}$, where $\Phi$ denotes the set of all
 monic,  non-constant, irreducible polynomials, $\pa^{n}$ is
 the set of all partitions 
of $1\leq k\leq n$ and $\ssp_{\On{}}^{n}$ is the set of all symplectic
 signed partitions 
of $1\leq k\leq n$. Such a $\lambda$ represent a conjugacy class of $\On{\epsilon}_{n}(q)$ for $\epsilon=\pm$,
if and only if (a)
 $\lambda(x)=0$, (b)
 $\lambda_{\varphi^*}=\lambda_\varphi$, (c)
 $\lambda_\varphi\in\ssp^n_{\On{}}$ iff $\varphi=u\pm 1$ (we distinguish this $\lambda$, by denoting it $\lambda^\pm$) and, (d)
 $\displaystyle\sum_{\varphi}|\lambda_\varphi|\textup{deg}(\varphi)=n$.
Class representative corresponding to given data can be found in \cite{ta1}, \cite{ta2}, \cite{glo} 
and we will mention them whenever needed. We mention the
following results about the conjugacy class size (and hence the size of the centraliser)
 of elements corresponding to given data, which can be found in \cite{wall63}.
\begin{lemma}\cite[pp. 36]{wall63}\label{lem:centralizer-sizes-symp}
Let $X\in\Sp_{2n}(q)$ be a matrix corresponding to the data $\Delta(X)=\{(\phi,\mu_\phi)
:\phi\in\Phi_X\subset \Phi\}$. Then the conjugacy class of
$X$ in $\Sp_{2n}(q)$ is of size $\dfrac{|\Sp_{2n}(q)|}{\prod\limits_{\phi}B(\phi)}$
where $B(\phi)$ and $A(\phi^\mu)$ are defined as follows
\begin{align*}
A(\phi^\mu)=\begin{cases}
|\U_{m_\mu}(Q)| & \text{if } \phi(x)=\phi^*(x)\neq x\pm 1\\
|\GL_{m_\mu}(Q)|^\frac{1}{2}& \text{if } \phi\neq\phi^*\\
|\Sp_{m_\mu}(q)| &\text{if }\phi(x)=x\pm 1,~\mu\text{ odd}\\
|q^{\frac{1}{2}m_\mu}\On{\epsilon}_{m_\mu}(q)|&\text{if }\phi(x)=x\pm 1,~\mu\text{ even}
\end{cases},
\end{align*}
where $\e$ gets determined by the sign of the corresponding partition, 
$Q=q^{|\phi|}$, $m_\mu=m(\phi^\mu)$ and
\begin{align*}
B(\phi) = Q^{\sum\limits_{\mu<\nu}\mu m_\mu m_\nu+
\frac{1}{2}\sum\limits_{\mu}(\mu-1)m_\mu^2}\prod\limits_{\mu}A(\phi^\mu).
\end{align*}
\end{lemma}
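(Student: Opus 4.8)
The plan is to prove the equivalent statement that the centralizer $C_{\Sp_{2n}(q)}(X)$ has order $\prod_\phi B(\phi)$, from which the class size $|\Sp_{2n}(q)|/\prod_\phi B(\phi)$ follows immediately. First I would regard $V=\fq^{2n}$ as a module over $\fq[x]$ with $x$ acting as $X$, and take its primary decomposition $V=\bigoplus_{\phi\in\Phi_X}V_\phi$, where $V_\phi$ is the part annihilated by a power of $\phi(X)$. The isometry condition $\langle Xu,Xv\rangle=\langle u,v\rangle$ forces $V_\phi$ to be orthogonal to $V_\psi$ whenever $\psi\neq\phi^*$; hence the form restricts to a nondegenerate pairing between $V_\phi$ and $V_{\phi^*}$, and every element of the centralizer preserves the whole decomposition. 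This already shows that $C_{\Sp_{2n}(q)}(X)$ splits as a direct product indexed by the $X$-invariant, form-nondegenerate summands $V_\phi\oplus V_{\phi^*}$ (for $\phi\neq\phi^*$) and $V_\phi$ (for $\phi=\phi^*$), reducing the computation to a single block at a time.

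For one block I would analyse the three cases in the definition of $A(\phi^\mu)$. When $\phi\neq\phi^*$ the space $V_\phi$ is totally isotropic and paired nondegenerately with $V_{\phi^*}$; an element of the centralizer is determined by its $\fq[x]$-linear action on $V_\phi$ alone, the action on $V_{\phi^*}$ being forced to be the adjoint inverse. Thus the reductive part of this block is $\Aut_{\fq[x]}(V_\phi)$, a general linear group $\GL_{m_\mu}(Q)$ over the extension $\fqn{\deg\phi}$ with $Q=q^{\deg\phi}$; sharing the single $\GL$ factor between the indices $\phi$ and $\phi^*$ accounts for the exponent $\tfrac12$ in $|\GL_{m_\mu}(Q)|^{1/2}$. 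When $\phi=\phi^*\neq x\pm1$ the polynomial has even degree, and the induced duality equips $V_\phi$ with a nondegenerate Hermitian form relative to the order-two automorphism of the underlying field extension, so the reductive part is the unitary group $\U_{m_\mu}(Q)$. The remaining, most delicate, cases are $\phi=x\pm1$, where $X$ acts as $\pm1$ plus a nilpotent and the restricted form is itself alternating.

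The heart of the argument is the analysis of the blocks $\phi=x\pm1$. Writing $N=X\mp\id$ on $V_\phi$, the partition $\mu_\phi$ records the Jordan block sizes of $N$, with $m_\mu$ blocks of size $\mu$. A skew-adjointness relation between $N$ and the alternating form shows that composing the form with powers of $N$ yields pairings whose symmetry type alternates with parity, so that the nondegenerate form induced on each size-$\mu$ piece is alternating (giving a symplectic group $\Sp_{m_\mu}(q)$) when $\mu$ is odd and symmetric (giving an orthogonal group $\On{\e}_{m_\mu}(q)$) when $\mu$ is even, with $\e$ dictated by the sign carried by the even part of the symplectic signed partition. Getting this parity bookkeeping correct, and tracking precisely which Witt-type invariant, hence which sign $\e$, arises, is the step I expect to be the main obstacle.

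Finally I would account for the off-diagonal part of the centralizer, namely the elements that mix Jordan blocks of different sizes while commuting with $X$ and preserving the form. These constitute a unipotent normal subgroup of the block centralizer whose order is a power of $Q=q^{\deg\phi}$ equal to the dimension of the relevant space of self-adjoint homomorphisms between block summands. A dimension count of these Hom-spaces, subject to the adjointness constraint imposed by the form, should yield the exponent $\sum_{\mu<\nu}\mu\,m_\mu m_\nu+\tfrac12\sum_\mu(\mu-1)m_\mu^2$ appearing in $B(\phi)$, while the factor $q^{\frac12 m_\mu}$ multiplying $\On{\e}_{m_\mu}(q)$ in the even case records an additional unipotent contribution particular to the even blocks, coming from the part of the centralizer acting trivially on the associated graded. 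Multiplying the reductive factors $A(\phi^\mu)$ by this unipotent contribution gives $B(\phi)$, and taking the product over all $\phi\in\Phi_X$ produces $|C_{\Sp_{2n}(q)}(X)|$, completing the proof.
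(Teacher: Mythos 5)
The paper offers no proof of this lemma at all---it is quoted directly from Wall---and your outline reproduces precisely the argument of Wall's original paper: primary decomposition of $V$ as an $\fq[x]$-module, orthogonality of $V_\phi$ and $V_\psi$ for $\psi\neq\phi^*$ so that the centralizer factors over the blocks, the identification of the factors as $\GL_{m_\mu}(Q)$ (shared between $\phi$ and $\phi^*$, which correctly explains the exponent $\frac12$), $\U_{m_\mu}(Q)$ for $\phi=\phi^*\neq x\pm 1$ via the induced Hermitian form, and $\Sp_{m_\mu}(q)$ or $q^{\frac12 m_\mu}\On{\e}_{m_\mu}(q)$ for $\phi=x\pm 1$ according to the parity of $\mu$ (your claim that the form $\langle u, N^{\mu-1}v\rangle$ on the graded piece is alternating for $\mu$ odd and symmetric of type $\e$ for $\mu$ even is the right bookkeeping), together with a dimension count of self-adjoint homomorphisms for the unipotent radical. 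Your strategy is correct and is essentially the same as the cited source's; the one cosmetic slip is that for $\phi\neq\phi^*$ the group $\Aut_{\fq[x]}(V_\phi)$ is the \emph{full} block centralizer rather than its reductive part, whose reductive quotient is $\prod_\mu\GL_{m_\mu}(Q)$.
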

\begin{lemma}\cite[pp. 39]{wall63}\label{centra-size-on}
Let $X\in\On{\e}_n(q)$ be a matrix corresponding to the data $\Delta(X)=\{(\phi,\mu_\phi)
:\phi\in\Phi_X\subset \Phi\}$. Then the conjugacy class of
$X$ in $\On{\e}_n(q)$ is of size $\dfrac{|\On{\e}_n(q)|}{\prod\limits_{\phi}B(\phi)}$
where $B(\phi)$ and $A(\phi^\mu)$ are defined as before, except when $\phi(x)=x\pm 1$,
\begin{align*}
A(\phi^\mu)=\begin{cases}
|\On{\epsilon'}_{m_\mu}(q)|&\text{if }\mu\text{ odd}\\
q^{-\frac{1}{2}m_{\mu}}|\Sp_{m_\mu}(q)|&\text{if }\mu\text{ even}
\end{cases},
\end{align*}
where $\e'$ in $\On{\e'}_{m_\mu}(q)$  gets determined by the 
corresponding sign of the part, of the partition.
\end{lemma}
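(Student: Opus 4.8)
The plan is to establish the equivalent assertion that the centralizer $C(X)$ of $X$ in $\On{\e}_n(q)$ has order $\prod_\phi B(\phi)$; the stated class size is then $|\On{\e}_n(q)|/|C(X)|$ by orbit--stabilizer. First I would make $V=\fq^n$ into an $\fq[t]$-module by letting $t$ act as $X$ and decompose it into primary components $V=\bigoplus_\phi V_\phi$, with $V_\phi=\ker\phi(X)^N$ for $N\gg 0$. Writing $\langle\,,\,\rangle$ for the symmetric form defining $\On{\e}_n(q)$, the isometry identity $\langle Xu,Xv\rangle=\langle u,v\rangle$ forces $\langle V_\phi,V_\psi\rangle=0$ unless $\psi=\phi^*$. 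Hence the form pairs $V_\phi$ with $V_{\phi^*}$ nondegenerately, restricts nondegenerately to each self-dual summand $V_\phi$ ($\phi=\phi^*$), and exhibits $V$ as an orthogonal direct sum over the duality classes $\{\phi,\phi^*\}$. Therefore $C(X)=\prod_{\{\phi,\phi^*\}}C_\phi$, where $C_\phi$ is the group of $X$-equivariant isometries of the corresponding summand, and it remains to identify each local factor $|C_\phi|$ with $B(\phi)$ (or with $B(\phi)B(\phi^*)$ on a dual pair).

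For the two generic families the analysis coincides verbatim with the symplectic \cref{lem:centralizer-sizes-symp}, and I would transcribe it. When $\phi\neq\phi^*$, an $X$-equivariant isometry of the hyperbolic pair $V_\phi\oplus V_{\phi^*}$ is determined freely by its restriction to $V_\phi$, which ranges over all $\fq[t]/\phi$-module automorphisms; with $Q=q^{\deg\phi}$ this gives a general linear group over the residue field, booked as $|\GL_{m_\mu}(Q)|^{1/2}$ for each of $\phi,\phi^*$. When $\phi=\phi^*\neq x\pm1$, the residue field carries the involution induced by $t\mapsto t^{-1}$, the pairing becomes Hermitian, and $C_\phi$ is the unitary group $\U_{m_\mu}(Q)$. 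In every case the prefactor $Q^{\sum_{\mu<\nu}\mu m_\mu m_\nu+\frac12\sum_\mu(\mu-1)m_\mu^2}$ is the order of the unipotent radical of $C_\phi$, namely those isometries acting trivially on the graded pieces $\ker N^i/\ker N^{i-1}$ of the nilpotent $N=\phi(X)|_{V_\phi}$; its exponent is a dimension count of the form-compatible off-diagonal maps between Jordan blocks of differing sizes, the isometry constraint accounting for the factor $\tfrac12$ relative to the full $\GL$ centralizer, while the reductive quotient is $\prod_\mu A(\phi^\mu)$.

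The substantive case is $\phi=x\pm1$, where on $V_\phi$ the element $X$ equals $\pm1$ plus a nilpotent $N$ carrying the symmetric form. A short computation with the adjoint shows $N^*=-N(1+N)^{-1}$ when $\phi=x-1$ (similarly for $x+1$), so $N$ is skew-adjoint modulo higher-order nilpotent terms --- enough to control the associated graded. The heart of the argument is then the induced pairing on the multiplicity space $M_\mu$ of Jordan blocks of size exactly $\mu$ (a subquotient of $V_\phi$ of dimension $m_\mu$), given up to lower order by $(u,v)\mapsto\langle u,N^{\mu-1}v\rangle$: transporting $N^{\mu-1}$ across the form introduces the sign $(-1)^{\mu-1}$, so the symmetry type of the induced form equals $(-1)^{\mu-1}$ times that of the ambient symmetric form. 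Thus $M_\mu$ carries an orthogonal form for $\mu$ odd and an alternating form for $\mu$ even --- exactly opposite to the symplectic case, which is what swaps the roles of $\On{}$ and $\Sp$ relative to \cref{lem:centralizer-sizes-symp}. Consequently $C_\phi$ contributes $\On{\epsilon'}_{m_\mu}(q)$ for odd $\mu$, where the type $\epsilon'$ is the discriminant of the induced form and is recorded as the sign of the part $\mu$ in the orthogonal signed partition, and a symplectic factor for even $\mu$.

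I expect the genuinely delicate points to be exactly those of this last case: (i) verifying the skew-adjointness and the $(-1)^{\mu-1}$ parity rule together with the discriminant bookkeeping that pins down the sign $\epsilon'$, and (ii) isolating the normalization constant $q^{-\frac12 m_\mu}$ attached to the even blocks, which I would extract by choosing a standard hyperbolic-plus-anisotropic basis adapted to $N$ and computing the order of the even-block isometry group directly over $\fq$. Once these local orders $A(\phi^\mu)$ and the prefactors are in hand, multiplying the factors $B(\phi)$ over all duality classes yields $|C(X)|$, and the class-size formula follows.
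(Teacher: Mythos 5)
The paper contains no proof of this lemma to compare against: it is imported directly from Wall \cite{wall63}, and the surrounding text only records the statement for later use. Judged on its own terms, your sketch is a faithful reconstruction of Wall's actual method: the primary decomposition $V=\bigoplus_\phi V_\phi$ with $\langle V_\phi,V_\psi\rangle=0$ unless $\psi=\phi^*$, the reduction of $C(X)$ to local factors ($\GL$ over the residue field on dual pairs $\{\phi,\phi^*\}$, unitary for self-dual $\phi\neq x\pm1$, and the multiplicity-space analysis for $\phi=x\pm1$), and in particular your computation $N^*=-N(1+N)^{-1}$ giving the induced form on $M_\mu$ symmetry type $(-1)^{\mu-1}$ relative to the ambient symmetric form. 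That parity flip is precisely correct and is the mechanism that interchanges the roles of $\On{}$ and $\Sp$ relative to \cref{lem:centralizer-sizes-symp}, yielding $\On{\epsilon'}_{m_\mu}(q)$ on odd parts and a symplectic factor on even parts, with $\epsilon'$ read off from the discriminant of the induced form, i.e.\ the sign of the part.

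Two points you flag as delicate deserve emphasis, since they are where the content of Wall's computation actually sits rather than routine verifications. First, your assertion that the prefactor $Q^{\sum_{\mu<\nu}\mu m_\mu m_\nu+\frac12\sum_\mu(\mu-1)m_\mu^2}$ is exactly the order of the unipotent radical is true only after the corrections $q^{\mp\frac12 m_\mu}$ on even parts are folded into $A(\phi^\mu)$: for even $\mu$ with $m_\mu$ odd the displayed exponent is half-integral, so the naive ``prefactor $=$ radical, $A$ $=$ reductive quotient'' dichotomy cannot hold literally, and the $q^{-\frac12 m_\mu}$ in the orthogonal case (versus $q^{+\frac12 m_\mu}$ in the symplectic case of \cref{lem:centralizer-sizes-symp}) is a genuine dimension-count discrepancy in the form-compatible maps within even-size blocks, which must be computed, as you propose, by a direct adapted-basis count. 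Second, since your induced pairing on $M_\mu$ is only defined ``up to lower order,'' you still need to check it descends to a well-defined nondegenerate form on the graded subquotient and that the lower-order terms do not perturb its discriminant class, so that $\epsilon'$ is well defined; this is standard but cannot be omitted. With those two computations supplied, your outline closes, and it does so along the same route as the cited source.
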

{\color{blue}The quantity $B(\phi)$ will also be denoted as $c_{\Sp,\phi,q}(\lambda^{\pm})$ in case of symplectic groups and as $c_{\On{\epsilon},\phi, q}(\lambda^\pm)$ in case of orthogonal groups when $\phi=x\pm 1$.}
\subsection{Unitary groups}
Consider the field $k=\mathbb F_{q^2}$, and the involution $\sigma \colon \fqn{2}\longrightarrow\fqn{2}$ defined as $\sigma(a):=\overline{a}=a^q$ of the field. We further get an automorphism of
$\fqn{2}[t]$, the polynomial ring over $\fqn{2}$, by action on the coefficients of the polynomials. The image of
$f\in\fqn{2}[t]$ will be denoted as $\overline{f}$. Consider the Hermitian form given by the matrix
\[
\Pi_n=
\begin{pmatrix}
0 & 0 & \ldots & 0 & 1 \\
0 & 0 & \ldots & 1 & 0 \\
\vdots & \vdots & \ddots & \vdots & \vdots \\
0 & 1 & \ldots & 0 & 0 \\
1 & 0 & \ldots & 0 & 0 
\end{pmatrix}.
\]
Then we identify the unitary group (corresponding to the above Hermitian form) with the set
\[
\U_n(\fqn{2})=\left\{A\in\GL_n(\fqn{2})\mid  A\Pi_n\overline{A}^t=\Lambda_n\right\}.
\]
Since all non-degenerate Hermitian forms on an $n$-dimensional vector space are unitarily congruent to the Hermitian form given by the above matrix,
it is evident that all the unitary groups arising are conjugate with each other inside
$\GL_n(q^2)$ (see theorem $10.3$ of \cite{Gr02}). From the work of Wall, it is known that two elements $g_1, g_2 \in\U_n(q^2)$ are conjugate to each other if and only if they are conjugate in $\GL_n(q^2)$.
These conjugacy classes are parameterized by special polynomials and partitions.
We call a polynomial $f$ of degree $d$ to be \emph{$\sim$-symmetric} if $\widetilde{f}=f$ where $\widetilde{f}(t) = \overline{f(0)}^{-1} t^d \bar f(t^{-1})$. A polynomial without a proper $\sim$-symmetric factor will be called a \emph{$\sim$-irrdeucible} polynomial. Note that a $\sim$-irreducible polynomial can be reducible in the usual sense.
Denote by $\widetilde{\Phi}$ to be the set of all $\sim$-irreducible polynomials. 
Let $\mathcal{P}_{n}$ denote the set of all partitions of numbers $\leq n$. Then a conjugacy class in $\U(n,\fqn{2})$ is determined by a function $\lambda\colon \widetilde{\Phi} \longrightarrow \mathcal{P}_n$ which satisfies
(a) $\lambda(f)=\lambda(\widetilde{f})$ and, (b) $\sum\limits_{\phi}|\lambda(\phi)|\deg\phi=n$. The centralizer six=ze can be found in \cite{wall63} and we are not quoting it here.

Using these information,
we determine when a matrix $A$ is an $M$-th root of identity. 
This information is further used in subsequent sections to determine the 
desired generating functions, with the help of the concept of 
central join of two matrices, following \cite{ta1}, \cite{ta2}, \cite{ta3}.

\section{Semisimple \& unipotent roots of identity}\label{sec:semsim-uni}
\subsection{Semisimple classes}
Note that an element in $\G(q)$ is semisimple if and only if it has order coprime to $q$. In this 
subsection, we classify all the semisimple elements, which give $\id$, when raised to the power $M$.
In this subsection, we will assume that $(M,q)=1$. Let $f$ denote a product of distinct irreducible polynomials, and $A$ be the 
semisimple matrix with $f$ as its characteristic polynomial. Then if $A^M=\id$, then $f$ must have irreducible factors coming from irreducible factors of
$x^M-1\in\fq[x]$. We recall some results and definitions from \cite{RuNi97FiniteField}.
 For a positive integer $n$, the splitting field of $x^n-1$ over a field $K$ is called the $n$-th 
 \emph{cyclotomic field} over $K$ and denoted by $K^{(n)}$.
 The roots of $x^n-1$ in $K^{(n)}$ are called the $n$-th roots of unity over $K$ and the set of all
 these roots is denoted by $E^{(n)}$.
\begin{lemma}\cite[Theorem 2.42]{RuNi97FiniteField}\label{lem:cyclotomic-field}
Let $n$ be a positive integer and $K$ a field of characteristic $p$. If $(p,n)=1$, then $E^{(n)}$ is a cyclic 
group of order $n$ with respect to multiplication in $K^{(n)}$.
\end{lemma}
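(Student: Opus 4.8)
The statement to prove is Lemma~\ref{lem:cyclotomic-field}, which is quoted from Lidl--Niederreiter: for a field $K$ of characteristic $p$ with $(p,n)=1$, the set $E^{(n)}$ of $n$-th roots of unity in the cyclotomic field $K^{(n)}$ forms a cyclic group of order $n$ under multiplication.

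The plan is to prove this in two movements: first establish that $|E^{(n)}|=n$, and then that the group is cyclic. For the cardinality, I would observe that $E^{(n)}$ is precisely the zero set of $f(x)=x^n-1$ in its splitting field $K^{(n)}$, so $|E^{(n)}|\leq n$ automatically. To get equality I must show $f$ has no repeated roots, i.e.\ $f$ is separable. The clean way is the derivative test: compute $f'(x)=nx^{n-1}$. Since $(p,n)=1$, the element $n$ is a unit in $K$, so $f'$ vanishes only at $x=0$; but $f(0)=-1\neq 0$, hence $f$ and $f'$ share no common root and $\gcd(f,f')=1$. Therefore $f$ is separable with exactly $n$ distinct roots in $K^{(n)}$, giving $|E^{(n)}|=n$.

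Next I would verify the group structure and cyclicity. That $E^{(n)}$ is closed under multiplication and inverses is immediate: if $\zeta^n=\eta^n=1$ then $(\zeta\eta)^n=1$ and $(\zeta^{-1})^n=1$, so $E^{(n)}$ is a finite subgroup of the multiplicative group $K^{(n)\times}$ of the field. The essential structural input is the standard fact that \emph{any} finite subgroup of the multiplicative group of a field is cyclic. I would invoke this as a known result; if a self-contained argument is wanted, the cleanest route uses the structure theorem for finite abelian groups, writing the exponent $e$ of $E^{(n)}$ and noting every element satisfies $x^e=1$, so there are at most $e$ such elements in the field, forcing $e=|E^{(n)}|=n$ and hence cyclicity (an abelian group whose exponent equals its order is cyclic).

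Since the lemma is cited verbatim from \cite{RuNi97FiniteField}, the genuinely substantive step is simply the separability computation $f'(x)=nx^{n-1}$ together with the coprimality hypothesis $(p,n)=1$ guaranteeing $n\neq 0$ in $K$; everything else is standard field theory. The only point demanding care, and what I would flag as the main obstacle if one insists on self-containment, is the proof that a finite multiplicative subgroup of a field is cyclic, since that is where the field structure (rather than mere group theory) is truly used: the bound ``at most $d$ roots of $x^d-1$'' fails in a general abelian group and holds here precisely because $K^{(n)}$ is an integral domain. Given the citation, however, I would present the argument compactly and lean on the quoted reference for the cyclicity statement.
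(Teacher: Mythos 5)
Your proof is correct and coincides with the standard argument in Lidl--Niederreiter, which the paper simply cites without reproving: separability of $x^n-1$ via the derivative $f'(x)=nx^{n-1}$ (where $(p,n)=1$ guarantees $n\neq 0$ in $K$) yields exactly $n$ distinct roots, and cyclicity follows from the classical fact that every finite subgroup of the multiplicative group of a field is cyclic. Your flagged point --- that the bound of at most $d$ roots of $x^d-1$ uses the integral-domain structure and fails in a general abelian group --- is exactly the right thing to isolate, so nothing further is needed.
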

A generator of the cyclic group is called \emph{primitive $n$-th root of unity over $K$}. If $(n,p)=1$ 
and $\zeta$ is a primitive $n$-th root of unity over $K$, the polynomial
\begin{align*}
    Q_n(x)=\prod\limits_{\substack{s=1\\(s,n)=1}}^{n} \left(x-\zeta^s\right),
\end{align*}
is called the \emph{$n$-th cyclotomic polynomial} over $K$. When $(n,p)=1$, we have that $x^n-1=\prod_{d|n}Q_d(x)$, see \cite[Theorem 2.45]{RuNi97FiniteField}. 
Note that if $A^M=1$, for each irreducible factor $g$ of $f$, there exists an $m(g)$ such that 
$f|Q_{m(g)}(x)$. Thus we need to know how $Q_n(x)$ factors over $\F_q[x]$.
We recall the following result.
\begin{lemma}\cite[Theorem 2.47(ii)]{RuNi97FiniteField}\label{lem-cyclo-factor}
When $K=\F_q$ and $(n,q)=1$, then the cyclotomic polynomial $Q_n$ factors into $\phi(n)/e(n)$ distinct 
monic irreducible polynomials in $K[x]$ of the same degree $e(n)$, where $e(n)$ is order of $q$ in 
$\Z/n\Z^{\times}$.
\end{lemma}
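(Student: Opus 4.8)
The plan is to read off the factorization from the action of the Frobenius automorphism on the $n$-th roots of unity. First I would note that because $(n,q)=1$ the polynomial $x^n-1$ is separable over $\fq$: its formal derivative $nx^{n-1}$ is a nonzero scalar times $x^{n-1}$, which is coprime to $x^n-1$ (as $0$ is not a root of $x^n-1$). Consequently its divisor $Q_n$ is separable as well, and by definition the roots of $Q_n$ in the cyclotomic field $\fq^{(n)}$ are exactly the primitive $n$-th roots of unity. By \cref{lem:cyclotomic-field} the group $E^{(n)}$ is cyclic of order $n$, so it has precisely $\phi(n)$ generators; hence $Q_n$ has $\phi(n)$ distinct roots and $\deg Q_n=\phi(n)$.

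Next I would compute the degree of the monic irreducible factor of $Q_n$ over $\fq$ containing a fixed primitive root $\zeta$. The extension $\fq^{(n)}/\fq$ is Galois with cyclic Galois group generated by the Frobenius $\varphi\colon x\mapsto x^q$, and the minimal polynomial of $\zeta$ over $\fq$ is $\prod_{j=0}^{d-1}(x-\zeta^{q^j})$, where $d$ is the least positive integer with $\zeta^{q^d}=\zeta$, i.e.\ the size of the Frobenius orbit of $\zeta$. Since $\zeta$ has multiplicative order exactly $n$, the relation $\zeta^{q^d}=\zeta$ is equivalent to $\zeta^{q^d-1}=1$, hence to $n\mid q^d-1$, that is $q^d\equiv 1\imod n$. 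The smallest such $d$ is by definition the order of $q$ in $\Z/n\Z^\times$, namely $e(n)$, so this irreducible factor has degree $e(n)$.

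The decisive observation---and the one point that deserves care---is that the degree $e(n)$ just obtained is independent of the chosen root $\zeta$: it was characterized solely through the common order $n$ of the primitive roots, not through $\zeta$ itself. Thus every irreducible factor of $Q_n$ over $\fq$ has the same degree $e(n)$. Finally I would finish by counting: the $\phi(n)$ roots of $Q_n$ are partitioned into Frobenius orbits, each of size $e(n)$ and each contributing one monic irreducible factor of degree $e(n)$; therefore $Q_n$ splits into $\phi(n)/e(n)$ such factors, and these factors are pairwise distinct because $Q_n$ was shown to be separable. I do not expect any serious obstacle here beyond making the independence of $e(n)$ from $\zeta$ explicit, since all remaining steps are the standard finite-field Galois theory already packaged in \cref{lem:cyclotomic-field}.
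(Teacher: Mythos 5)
Your proof is correct. Note that the paper itself contains no proof of this lemma---it is imported verbatim from Lidl--Niederreiter \cite[Theorem 2.47(ii)]{RuNi97FiniteField}---and your Frobenius-orbit argument (separability of $x^n-1$ from $(n,q)=1$, orbit size equal to the order $e(n)$ of $q$ in $\Z/n\Z^\times$ independently of the chosen primitive root, then counting $\phi(n)/e(n)$ orbits) is precisely the standard proof of the cited theorem; the only step you leave tacit is that $Q_n$ actually lies in $\fq[x]$, equivalently that the set of primitive $n$-th roots of unity is Frobenius-stable, which is immediate since $(q,n)=1$ makes $\zeta\mapsto\zeta^q$ a permutation of the elements of order $n$.
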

Since $(M,p)=1$, the polynomial $x^M-1$ is separable and hence for $d\neq d'$ dividing
$M$, the cyclotomic polynomials $Q_d(x)$ and $Q_{d'}(x)$ are coprime. Hence we have the following lemma.
\begin{proposition}\label{lem:gen-sem-GL}
    Let $(M,q)=1$ and $\G_n=\GL_n(q)$. Let $a_n$ denote the number of 
    semisimple conjugacy classes
    $x_{n,i}^{G}$, such that $x_{n,i}^M=\id_n$ for some indices $i$. Then we have that
    \begin{equation}
        1+\sum\limits_{n=1}^{\infty} a_nz^n=\prod\limits_{\substack{d|M}}\left(\dfrac{1}{1-z^{e(d)}}\right)^{\frac{\phi(d)}{e(d)}},
    \end{equation}
    where $e(d)$ denotes the multiplicative order of $q$ in $\Z/d\Z^\times$.
\end{proposition}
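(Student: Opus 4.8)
The plan is to set up a generating function that counts semisimple conjugacy classes of $\GL_n(q)$ which are $M$-th roots of identity, and to factor it according to the irreducible divisors of $x^M-1$ over $\fq$. A semisimple class of $\GL_n(q)$ that satisfies $x^M=\id$ corresponds to a function $\lambda\colon \Phi\to\pa$ supported on those irreducible polynomials $\varphi$ whose roots are $M$-th roots of unity, and for which every part of $\lambda_\varphi$ equals $1$ (semisimplicity forces each $\lambda_\varphi$ to be a column, i.e.\ only multiplicities matter, not genuine partition shape). Equivalently, by \cref{lem-cyclo-factor}, for each $d\mid M$ the cyclotomic polynomial $Q_d(x)$ splits over $\fq$ into exactly $\phi(d)/e(d)$ distinct monic irreducibles, each of degree $e(d)$, and these are precisely the irreducible factors of $x^M-1$ (using that $x^M-1=\prod_{d\mid M}Q_d(x)$ is separable since $(M,q)=1$). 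So first I would identify the relevant set of irreducible polynomials as a disjoint union, over $d\mid M$, of $\phi(d)/e(d)$ polynomials of degree $e(d)$.

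Next I would observe that specifying a semisimple $M$-th-root-of-identity class is the same as choosing, independently for each such irreducible $\varphi$, a nonnegative integer multiplicity $k_\varphi\geq 0$ (the number of times $\varphi$ contributes, each copy occupying $\deg\varphi$ dimensions), subject to $\sum_\varphi k_\varphi\deg\varphi=n$. Since the choices are independent across the distinct irreducibles, the ordinary generating function $1+\sum_{n\geq 1}a_n z^n$ factors as an infinite product over the relevant $\varphi$, with each factor the geometric series $\sum_{k\geq 0} z^{k\deg\varphi}=\dfrac{1}{1-z^{\deg\varphi}}$ recording the possible multiplicities of $\varphi$. Grouping these factors by the divisor $d\mid M$ that produced them, each $d$ contributes $\phi(d)/e(d)$ identical factors of the form $\bigl(1-z^{e(d)}\bigr)^{-1}$, which immediately gives
\begin{equation*}
1+\sum_{n=1}^{\infty} a_n z^n=\prod_{d\mid M}\left(\dfrac{1}{1-z^{e(d)}}\right)^{\frac{\phi(d)}{e(d)}}.
\end{equation*}

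The main point to justify carefully is the independence/disjointness claim underlying the product factorization: that the irreducible factors arising from different $d\mid M$ are genuinely distinct, so that a multiplicity assignment across all of them corresponds to exactly one class and the product of geometric series correctly tracks the constraint $\sum k_\varphi\deg\varphi=n$. This is where I would invoke separability of $x^M-1$ together with the pairwise coprimality of the $Q_d$ for distinct $d\mid M$ (noted just before the statement), so that no irreducible is double-counted and the degree bookkeeping is exact. The remaining step—matching the coefficient of $z^n$ in the product to the count $a_n$—is then the standard identity that a product of $\frac{1}{1-z^{\deg\varphi}}$ over a fixed family of polynomials enumerates multisets weighted by total degree, which is a routine generating-function manipulation rather than a genuine obstacle.
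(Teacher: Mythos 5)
Your proposal is correct and takes essentially the same route as the paper's proof: reduce to the minimal polynomial dividing $x^M-1$, use $x^M-1=\prod_{d\mid M}Q_d(x)$ together with \cref{lem-cyclo-factor} to get $\phi(d)/e(d)$ distinct irreducibles of degree $e(d)$ for each $d\mid M$, and invoke separability of $x^M-1$ for pairwise coprimality of the $Q_d$. The only difference is that the paper ends abruptly with ``hence the result follows,'' whereas you explicitly carry out the multiset-of-irreducibles bookkeeping (each $\varphi$ contributing a geometric series $\bigl(1-z^{\deg\varphi}\bigr)^{-1}$), which is a welcome filling-in of detail rather than a different argument.
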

\begin{proof}
    Let $A$ be a semisimple matrix satisfying $A^M=1$. Then the characteristic polynomial of 
    $A$, divides $x^M-1$. Since $(M,q)=1$, using factorization of $x^M-1$ and \cref{lem-cyclo-factor},
    we get that the irreducible factors of $x^M-1$ are of degree $\phi(d)/e(d)$ for $d|M$, where $e(d)$
    denotes the multiplicative order of $q$ in $\Z/d\Z^{\times}$. Furthermore, $(M,q)=1$ implies that
    the polynomial $x^M-1$ is separable. Hence
    for $d\neq d'$ dividing $M$, we have that $(Q_d(x),Q_{d'}(x))=1$. Hence the result follows.
\end{proof}
To deduce the result on the probability of a semisimple element to be an $M$-th root of identity, we need 
to know the order of the centralizer of each element. This has been discussed in \cref{sec:conjcent}. 
\textcolor{black}{\begin{corollary}\label{cr:semsp-GL-elements}
        Let $(M,q)=1$ and $\G_n=\GL_n(q)$. Let $b_n$ denote the proportion of
    semisimple $M$-th roots of identity in $\G_n$. Then we have that
    \begin{equation*}
        1+\sum\limits_{n=1}^{\infty} b_nz^n=\prod\limits_{\substack{d|M}}\left(1+
        \sum\limits_{m=1}^{\infty}\dfrac{z^{me(d)}}{|\GL_m(q^{e(d)})|}\right)^{\frac{\phi(d)}{e(d)}},
    \end{equation*}
    where $e(d)$ denotes the multiplicative order of $q$ in $\Z/d\Z^\times$.
\end{corollary}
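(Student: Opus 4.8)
The plan is to translate the class count of \cref{lem:gen-sem-GL} into a weighted count by centralizer orders. Since a single conjugacy class $x^{\G_n}$ occupies the proportion $|x^{\G_n}|/|\G_n|=1/|C_{\G_n}(x)|$ of the group, the quantity $b_n$ is exactly $\sum_x 1/|C_{\G_n}(x)|$, the sum running over representatives of the semisimple classes with $x^M=\id_n$. By (the proof of) \cref{lem:gen-sem-GL}, such a class is recorded by a function $\lambda\colon\Phi\to\pa$ supported on the monic irreducible factors of $x^M-1$; semisimplicity (squarefree minimal polynomial) forces each partition $\lambda_\varphi$ to be a single column $(1^{m_\varphi})$, so the data amounts to a choice of multiplicities $m_\varphi\geq 0$ with $\sum_\varphi m_\varphi\deg\varphi=n$.

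Next I would evaluate the centralizer by specializing the formula $c_{\GL,\varphi,q}(\lambda)$ recalled above at $\lambda_\varphi=(1^m)$. Write $Q=q^{\deg\varphi}$. The conjugate partition of $(1^m)$ is $(m)$, so $\sum_i(\lambda_i')^2=m^2$, while $m_1(\lambda_\varphi)=m$ and $m_i(\lambda_\varphi)=0$ for $i>1$, leaving a single Pochhammer factor. Hence
\[
c_{\GL,\varphi,q}\big((1^m)\big)=Q^{m^2}\prod_{j=1}^{m}\big(1-Q^{-j}\big)=|\GL_m(Q)|,
\]
and the full centralizer order is the product $\prod_\varphi|\GL_{m_\varphi}(q^{\deg\varphi})|$ — the expected decomposition of the centralizer of a semisimple element into general linear groups over the residue extensions.

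Finally I would assemble the generating function as an Euler product over the irreducible factors of $x^M-1$. Because the multiplicities $m_\varphi$ are chosen independently subject only to $\sum_\varphi m_\varphi\deg\varphi=n$, summing $1/\prod_\varphi|\GL_{m_\varphi}(q^{\deg\varphi})|$ gives
\[
1+\sum_{n=1}^{\infty}b_n z^n=\prod_{\varphi\mid x^M-1}\left(\sum_{m=0}^{\infty}\frac{z^{m\deg\varphi}}{|\GL_m(q^{\deg\varphi})|}\right),
\]
with the convention $|\GL_0(\cdot)|=1$. To reach the stated form I would group the factors by $d\mid M$ using \cref{lem-cyclo-factor}: since $x^M-1=\prod_{d\mid M}Q_d$ and each $Q_d$ splits into $\phi(d)/e(d)$ distinct irreducibles of degree $e(d)$, while separability of $x^M-1$ keeps these sets disjoint across different $d$, the $\phi(d)/e(d)$ identical factors of degree $e(d)$ collect into an exponent, and extracting the $m=0$ term (equal to $1$) from each factor yields the claimed product. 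The main obstacle is not any single estimate but the clean bookkeeping — correctly reading off the degrees and multiplicities of the irreducible factors of $x^M-1$ and verifying the centralizer specialization; both become routine once the $(1^m)$-evaluation above is in place.
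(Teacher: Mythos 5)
Your proposal is correct and follows essentially the same route the paper intends: the corollary is stated there without explicit proof, as the centralizer-weighted version of \cref{lem:gen-sem-GL}, and your specialization $c_{\GL,\varphi,q}\bigl((1^m)\bigr)=|\GL_m(q^{\deg\varphi})|$ together with the Euler-product bookkeeping over the irreducible factors of $x^M-1$, grouped by $d\mid M$ via \cref{lem-cyclo-factor}, is precisely the computation the paper leaves implicit. Your version even corrects a typo in the paper's proof of \cref{lem:gen-sem-GL} by correctly taking the factor degrees to be $e(d)$ rather than $\phi(d)/e(d)$.
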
}
To calculate the number of semisimple conjugacy classes in $\Sp_{2n}(q)$, which are $M$-th root of $\id$, we need to know the factorization of $x^M-1$. Before proceeding further, for an integer $m$, 
let us define 
\begin{align*}
D_m=\{x\in \mathbb{Z}_{>0}: x|q^m+1,x\nmid q^{t}+1, \text{ for all }0\leq t<m\}.     
\end{align*}
The set $D_m$ plays a crucial role in counting the semisimple elements in finite symplectic and 
orthogonal groups, which gives $\id$ on raising the power to $M$. We collect the following results from \cite{YuMu04}.
\begin{lemma}\cite[Theorem 8, and Theorem 11]{YuMu04}\label{lem:self-reci-split}
We have the following results.
    \begin{enumerate}
        \item An irreducible polynomial of degree $2m$ over $\F_q$ is SRIM iff $\ord(f)\in D_m$
        \item For $d\in D_m$ the $d$-th cyclotomic polynomial $Q_d$ factors into product of all 
        distinct SRIM polynomials of degree $2m$ and order $d$.
    \end{enumerate}
\end{lemma}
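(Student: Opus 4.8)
The plan is to reduce the entire statement to arithmetic in the multiplicative order. Fix an irreducible monic $f\in\fq[x]$ with $f(0)\neq 0$ and a root $\alpha\in\overline{\fq}$, and set $n=\ord(f)$, the multiplicative order of $\alpha$. The roots of $f$ form a single Frobenius orbit $\alpha,\alpha^q,\dots,\alpha^{q^{\deg f-1}}$, so $\deg f=e(n)$, the order of $q$ in $\Z/n\Z^\times$. The dual $f^*$ has as roots the inverses of the roots of $f$, so $f$ is self-reciprocal (i.e.\ $f=f^*$) exactly when $\alpha^{-1}$ is again a root, that is $\alpha^{-1}=\alpha^{q^{j}}$ for some $j$, equivalently $q^{j}\equiv -1\imod{n}$ for some $j$. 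Thus \textbf{everything comes down to deciding when $-1$ lies in the cyclic group $\langle q\rangle\subseteq\Z/n\Z^\times$}, together with the minimality clause built into $D_m$.

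For the arithmetic core I would work inside $\langle q\rangle$, which is cyclic of order $e(n)$. Since $q$ is odd, $e(1)=e(2)=1$, so whenever $e(n)\geq 2$ we automatically have $n\geq 3$ and hence $-1\not\equiv 1\imod n$; thus $-1$ is an involution. The cyclic group $\langle q\rangle$ has a \emph{unique} involution, namely $q^{e(n)/2}$. Consequently $-1\in\langle q\rangle$ forces $e(n)$ to be even and $-1=q^{e(n)/2}$; conversely $q^{e(n)/2}\equiv-1$ trivially gives $-1\in\langle q\rangle$. This yields the clean criterion: $f$ is self-reciprocal iff $e(n)$ is even and $q^{e(n)/2}\equiv -1\imod{n}$. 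The hard part is the next step, where the minimality clause must be shown to pin $e(n)$ to the exact value $2m$.

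Now I would prove part (1). If $f$ is SRIM of degree $2m$, then $e(n)=2m$, so $q^{m}\equiv -1\imod{n}$ and $n\mid q^m+1$; and if $n\mid q^t+1$ for some $0\le t<m$ then $q^{2t}\equiv 1$, forcing $2m=e(n)\mid 2t$, i.e.\ $m\mid t$, impossible for $0\le t<m$ unless $t=0$, which gives $n\mid 2$ and contradicts $n\geq 3$; hence $n\in D_m$. Conversely, suppose $n\in D_m$. From $n\mid q^m+1$ we get $q^{2m}\equiv 1$, so $e:=e(n)\mid 2m$, while $q^m\equiv -1\not\equiv 1$ gives $e\nmid m$. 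By the criterion above $e$ is even and $q^{e/2}\equiv -1\equiv q^m$, so $q^{m-e/2}\equiv 1$, whence $e\mid m-e/2$ and $m\equiv e/2\imod{e}$; in particular $e/2\le m$. If $e/2<m$ then $t=e/2$ would satisfy $0\le t<m$ with $n\mid q^{e/2}+1$, contradicting $n\in D_m$. Therefore $e/2=m$, i.e.\ $\deg f=e(n)=2m$, and $q^m\equiv -1\imod{n}$ makes $f$ self-reciprocal. This proves (1), and records that $e(d)=2m$ for every $d\in D_m$.

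Finally, for part (2), fix $d\in D_m$. Since $d\mid q^m+1$, no prime dividing $q$ can divide $d$, so $(d,q)=1$, the polynomial $Q_d$ is separable, and by \cref{lem-cyclo-factor} it splits over $\fq$ into $\phi(d)/e(d)$ distinct monic irreducibles, each of degree $e(d)=2m$. The roots of $Q_d$ are precisely the elements of order $d$, so every irreducible factor $g$ satisfies $\ord(g)=d\in D_m$ and is SRIM of degree $2m$ by part (1). Conversely, any SRIM polynomial of degree $2m$ and order $d$ has all its roots among the primitive $d$-th roots of unity, hence divides $Q_d$. Thus the irreducible factors of $Q_d$ are exactly the distinct SRIM polynomials of degree $2m$ and order $d$, and $Q_d$ is their product. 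As noted, the only real obstacle is the second-paragraph arithmetic lemma—showing that the clause $n\nmid q^t+1$ for $t<m$ is precisely what rules out $e(n)$ being a proper even divisor of $2m$; once that is in hand, both assertions follow formally.
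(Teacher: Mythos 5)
Your proof is correct, but note that the paper itself offers no argument here at all: this lemma is quoted verbatim from Yucas--Mullen \cite[Theorems 8 and 11]{YuMu04}, so what you have done is supply the missing self-contained proof rather than parallel an existing one. Your reduction is the standard one underlying the cited theorems: identify $\deg f$ with $e(n)$ for $n=\ord(f)$ via the Frobenius orbit, observe that $f=f^*$ exactly when $-1\in\langle q\rangle\subseteq(\Z/n\Z)^{\times}$, and then exploit that a cyclic group of order $e(n)$ has a unique involution $q^{e(n)/2}$, so self-reciprocity is equivalent to $e(n)$ even with $q^{e(n)/2}\equiv-1\imod{n}$. The delicate points all check out: in the converse of (1) you need $-1\not\equiv 1\imod{n}$, and this is exactly what the $t=0$ clause in the definition of $D_m$ (namely $n\nmid q^0+1=2$) provides, which you use correctly; the step $e\mid 2m$, $q^{e/2}\equiv q^m$, hence $e\mid m-e/2$, and the observation that $e/2<m$ would violate membership in $D_m$ at $t=e/2$, correctly pins $e(n)=2m$; and your argument in fact proves the purely arithmetic statement that $e(d)=2m$ for \emph{every} $d\in D_m$, which is precisely what part (2) needs before \cref{lem-cyclo-factor} can be invoked to get $\phi(d)/2m$ distinct irreducible factors, each of order $d$ and hence SRIM by part (1). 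One cosmetic remark: in the converse of (2) you should say explicitly that $\ord(g)=d$ for irreducible $g$ means its roots are primitive $d$-th roots of unity (the equivalence of the least $e$ with $g\mid x^e-1$ and the multiplicative order of a root), but this is standard and does not affect correctness.
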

\begin{corollary}\label{lem:star-factor}
The polynomial $Q_d(x)$ has a (and hence all) $*$-symmetric irreducible factor
if and only if $d\in D_m$ for some $m$. Furthermore, if $e(d)$ is odd, $Q_d$ has no SRIM factor.
\end{corollary}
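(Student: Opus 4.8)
The plan is to read off both assertions from the Frobenius description of the irreducible factors of $Q_d(x)$, using \cref{lem:self-reci-split} for the bookkeeping with $D_m$ and \cref{lem-cyclo-factor} for the degrees. The organizing observation is that the irreducible factors of $Q_d$ correspond to the orbits of multiplication by $q$ on the set of primitive $d$-th roots of unity, equivalently to the cosets of the cyclic subgroup $\langle q\rangle\subseteq\Z/d\Z^\times$; labelling the factor through a root $\zeta^{a}$ by the class $a\in\Z/d\Z^\times$, its root set is $\{\zeta^{aq^{i}}\}_i$. Such a factor is $*$-symmetric precisely when its root set is stable under inversion $\zeta\mapsto\zeta^{-1}$, i.e.\ when $-a\in a\langle q\rangle$, which is the coset-independent condition $-1\in\langle q\rangle$ in $\Z/d\Z^\times$. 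This single condition already yields the parenthetical ``and hence all'': either every factor of $Q_d$ is $*$-symmetric or none is.

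For the equivalence in the first assertion I would argue as follows. Suppose $Q_d$ has a $*$-symmetric irreducible factor $f$. Its roots are primitive $d$-th roots of unity, so $\ord(f)=d$, and $f$ is a self-reciprocal irreducible monic polynomial; being irreducible of degree $\geq 2$ it cannot have $\pm1$ among its roots, so these split into inversion-pairs and $\deg f=2m$ is even. \cref{lem:self-reci-split}(1) then gives $d=\ord(f)\in D_m$. Conversely, if $d\in D_m$ then \cref{lem:self-reci-split}(2) writes $Q_d$ as a product of SRIM polynomials of degree $2m$, so it certainly has a $*$-symmetric factor. Equivalently, $-1\in\langle q\rangle$ means $q^{t}\equiv-1\pmod d$ for some $t$, i.e.\ $d\mid q^{t}+1$ for some $t$; taking the least such $t=m$ gives precisely $d\in D_m$, matching the two directions above.

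For the second assertion I would use the same criterion. If $Q_d$ had a $*$-symmetric factor then $-1\in\langle q\rangle$ in $\Z/d\Z^\times$; for $d\geq 3$ the element $-1$ has order exactly $2$, so $\langle q\rangle$ contains an element of order $2$ and hence $2\mid|\langle q\rangle|=e(d)$. Contrapositively, if $e(d)$ is odd then $Q_d$ has no SRIM factor. (The degenerate values $d\in\{1,2\}$, giving the factors $x\pm1$, are the only self-reciprocal irreducible polynomials of odd degree; these are exactly the cases $\phi=x\pm1$ treated separately throughout, so we read SRIM as ``of even degree'' here.)

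The main obstacle is not any single computation but pinning down the interface between the three equivalent phrasings---$d\in D_m$ for some $m$, $d\mid q^{t}+1$ for some $t$, and $-1\in\langle q\rangle\pmod d$---together with the bookkeeping of the degree-one exceptions $x\pm1$. Concretely, one must check that the least $t$ with $d\mid q^{t}+1$ is exactly the index $m$ for which $d\in D_m$ (so that the $D_m$ are disjoint and their union is the set of $d$ dividing some $q^{t}+1$), and that this least-$t$ normalization is consistent with the degree identity $\deg f=2m=e(d)$ forced by \cref{lem-cyclo-factor}: indeed $q^{m}\equiv-1\pmod d$ gives $e(d)\mid 2m$ while $q^{m}\not\equiv 1$ for $d\geq 3$ rules out $e(d)\mid m$, so $e(d)=2m$. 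Once that normalization is in place, the two displayed lemmas supply the rest.
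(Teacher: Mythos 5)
Your argument is correct, and for the central equivalence it follows the same route as the paper: the backward implication is \cref{lem:self-reci-split}(2), and the forward implication combines \cref{lem:self-reci-split}(1) with the observation that an irreducible factor $f$ of $Q_d$ has all its roots primitive $d$-th roots of unity, so $\ord(f)=d$ --- which is exactly the paper's proof. Where you genuinely diverge is in the supporting arguments. Your Frobenius-orbit description --- irreducible factors of $Q_d$ correspond to cosets of $\langle q\rangle$ in $\Z/d\Z^\times$, and a factor is $*$-symmetric iff $-1\in\langle q\rangle$, a coset-independent condition --- gives an honest proof of the parenthetical ``and hence all'', which the paper never argues explicitly (there it follows only implicitly from \cref{lem:self-reci-split}(2)). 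Likewise, for the second assertion the paper argues by degree parity (every irreducible factor of $Q_d$ has odd degree $e(d)$, while $*$-symmetric irreducible polynomials have even degree), whereas you deduce $2\mid e(d)$ from $-1$ having order two in $\Z/d\Z^\times$ for $d\geq 3$; both are one-line arguments, but yours makes the exceptional role of $d\in\{1,2\}$, i.e.\ of the factors $x\pm 1$, explicit, where the paper glosses over it. One compressed step you should expand: from $e(d)\mid 2m$ and $e(d)\nmid m$ alone one cannot conclude $e(d)=2m$ (take $e=4$, $m=6$); you need the minimality of $m$ that you fixed earlier, e.g.\ via the remark that $q^{e(d)/2}$ is the unique element of order two in the cyclic group $\langle q\rangle$, hence equals $-1$, so the least $t$ with $q^t\equiv -1\pmod d$ is exactly $e(d)/2$ and therefore $m=e(d)/2$. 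With that repair, the interface you flag between $d\in D_m$, $d\mid q^t+1$ for some $t$, and $-1\in\langle q\rangle$ works exactly as you describe, and your proof is a valid (slightly more self-contained) variant of the paper's.
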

\begin{proof}
    By definition, $Q_n$ is a $*$-symmetric polynomial. Hence its factors are either irreducible 
    $*$-symmetric or product of the form $gg^*$, where $g\neq g^*$ and $g$ are irreducible. If 
    $e(n)$ is odd, then there is no irreducible $*$-symmetric factor of $Q_n$,
    since all $*$-symmetric irreducible polynomials are of even degree. So we assume that
    $e(n)$ is even.
    So assume $e(n)$ is even. The backward direction is the content of \cref{lem:self-reci-split}. In the other direction,
    assume that $Q_n(x)$ has a $*$-symmetric polynomial, say $f$. Then $\ord(f)\in D_m$ for some $m$. But, by definition of $Q_n$, we have that $\ord(f)=n$ and hence $n\in D_m$.
\end{proof}
\begin{proposition}\label{lem:gen-sem-Sp}
    Let $(M,q)=1$ and $G=\Sp_{2n}(q)$. Let $a_n$ denote the number of
    semisimple conjugacy classes
    $x_{n,i}^{G}$, such that $x_{n,i}^M=\id_n$ for some indices $i$. Then we have that
    \begin{equation}
        1+\sum\limits_{n=1}^{\infty} a_nz^n=\prod\limits_{m=1}^{\infty}\left(\prod\limits_{\substack{d|M\\ d\in D_m}}\left(\dfrac{1}{1-z^{e(d)}}\right)^{\frac{\phi(d)}{e(d)}}\prod\limits_{\substack{d|M\\d\not\in D_m}}\left(\dfrac{1}{1-z^{2e(d)}}\right)^{\frac{\phi(d)}{2e(d)}}\right)
    \end{equation}
    where $e(d)$ denotes the multiplicative order of $q$ in $\Z/d\Z^\times$.
\end{proposition}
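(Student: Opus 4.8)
The plan is to follow the proof of \cref{lem:gen-sem-GL}, now imposing the self-duality constraint $\lambda_{\varphi^\ast}=\lambda_\varphi$ that cuts the symplectic classes out of those of $\GL_{2n}(q)$. First I would note that a semisimple $X\in\Sp_{2n}(q)$ with $X^M=\id$ has characteristic polynomial dividing $x^M-1=\prod_{d\mid M}Q_d(x)$, and that in Wall's parameterization its datum $\lambda$ assigns to each relevant irreducible $\varphi$ a partition of the form $\lambda_\varphi=(1^{m_\varphi})$, where $m_\varphi$ is the multiplicity of $\varphi$ as an elementary divisor. A semisimple class is then determined by these multiplicities; since the primary components attached to distinct $\ast$-orbits are mutually orthogonal for the symplectic form and may be chosen independently, the generating function factorizes as a product of local contributions, one per $\ast$-orbit $\{\varphi,\varphi^\ast\}$ of irreducible factors of $x^M-1$. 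Throughout I let the generating variable record the dimension contributed, so that a primary component of dimension $D$ is weighted by $z^{D}$.

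Next I would run the dichotomy furnished by \cref{lem:star-factor} and \cref{lem:self-reci-split}. Fix $d\mid M$; by \cref{lem-cyclo-factor} the polynomial $Q_d$ splits into $\phi(d)/e(d)$ distinct monic irreducibles of degree $e(d)$. If $d\in D_m$ for some $m$, every such factor is $\ast$-symmetric of degree $e(d)=2m$, so a single self-dual $\varphi=\varphi^\ast$ may carry an arbitrary multiplicity $k\ge0$ and occupy dimension $k\,e(d)$; summing $z^{k e(d)}$ over $k$ and raising to the count $\phi(d)/e(d)$ of such factors gives the first inner product $(1-z^{e(d)})^{-\phi(d)/e(d)}$. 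If instead $d\notin D_m$, the factors of $Q_d$ fall into $\phi(d)/(2e(d))$ dual pairs $\{g,g^\ast\}$ with $g\ne g^\ast$, and the constraint $\lambda_{g^\ast}=\lambda_g$ leaves a single free multiplicity $k\ge0$ per pair occupying dimension $2k\,e(d)$; this contributes $(1-z^{2e(d)})^{-\phi(d)/(2e(d))}$. Assembling these factors over all $d\mid M$, and using that distinct $d$ lie in disjoint $D_m$ so that each $\ast$-orbit is counted exactly once, produces the displayed double product.

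The step I expect to be the main obstacle is the exceptional treatment of the eigenvalues $\pm1$, i.e.\ the self-dual degree-one factors $x\mp1$, together with reconciling the index of $z$ with the counting of $n$. For $x\mp1$ the relevant partition is a symplectic signed partition, so the semisimple multiplicity is forced to be \emph{even} (the $\pm1$-eigenspaces are non-degenerate symplectic subspaces), and their local factor is $(1-z^{2})^{-1}$ rather than the value the $D_m$-dichotomy would assign to $d=1$ or $d=2$. I would therefore isolate $d=1$ (and $d=2$ when $M$ is even) and verify directly that these contributions are consistent with the stated product; the remaining care lies in checking that $d\in D_m$ indeed forces $e(d)=2m$, so that the SRIM factors have even degree and the exponents $\phi(d)/e(d)$ and $\phi(d)/(2e(d))$ are integers.
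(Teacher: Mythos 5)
Your proposal takes essentially the same route as the paper's proof: the characteristic polynomial of a semisimple $M$-th root of $\id$ divides $x^M-1=\prod_{d\mid M}Q_d(x)$, \cref{lem-cyclo-factor} gives $\phi(d)/e(d)$ distinct irreducible factors of degree $e(d)$, and the dichotomy of \cref{lem:star-factor} and \cref{lem:self-reci-split} separates the $d$ lying in some $D_m$ (all factors SRIM, one free multiplicity per factor, local factor $(1-z^{e(d)})^{-1}$) from the remaining $d$ (factors grouped into dual pairs $\{g,g^*\}$ with $\lambda_g=\lambda_{g^*}$ forced, local factor $(1-z^{2e(d)})^{-1}$ per pair). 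That is the entirety of the paper's argument, and your version of it is correct, including the point that $d\in D_m$ forces $e(d)=2m$.

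Where you go beyond the paper is the exceptional case $\varphi=u\mp1$, and your instinct is right — but the "consistency check" you defer does not in fact come out consistent, and you should not expect it to. As you note, the semisimple datum at $u\mp1$ is the symplectic signed partition $(1^{k})$, and since odd parts have even multiplicity, $k=2j$ is forced; with your convention that $z$ records dimension, each of $u-1$ (and $u+1$ when $2\mid M$) contributes $(1-z^{2})^{-1}$. The displayed product instead places $d=1$ (and $d=2$) in the second inner product, where $e(1)=e(2)=1$ and $\phi(1)=\phi(2)=1$ produce the formal factor $(1-z^{2})^{-1/2}$ per such $d$ — a fractional exponent that cannot be a local class count, and which disagrees with the true contribution even after combining $d=1,2$ for even $M$ (one gets $(1-z^{2})^{-1}$ versus the correct $(1-z^{2})^{-2}$). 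The paper's proof is silent on this: its pairing argument $g\neq g^*$ simply does not apply to the self-dual linear factors $u\mp1$. The paper implicitly concedes the point one result later, in \cref{cr:semsp-Sp-elements}, where $d=1,2$ are excluded from both products and replaced by a separate factor raised to the power $o(M)$. So the way to finish your proof is to restate the right-hand side accordingly: an initial factor $\left(\dfrac{1}{1-z^{2}}\right)^{o(M)}$ with $o(M)=2$ if $2\mid M$ and $1$ otherwise, times the two products restricted to $d\neq 1,2$, reading the condition as $d\in\bigcup_m D_m$ versus its complement (each $d$ lies in at most one $D_m$, so the outer product over $m$ must not repeat the complementary factors). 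One final bookkeeping wrinkle you share with the paper: the proposition indexes $a_n$ by $\Sp_{2n}(q)$ while both the paper's proof and yours weight by full dimension, so either the left-hand side should read $z^{2n}$ or all exponents of $z$ on the right should be halved.
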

\begin{proof}
    Let $A\in\Sp_{2n}(q)$ be a matrix satisfying $A^M=\id$. Then we have that, the minimal polynomial of $A$ over $\F_q$
    should divide $x^M-1$. Since $x^M-1=\prod\limits_{d|M}Q_d(x)$ and $Q_d(x)$ further decomposes
    into irreducibles we need to do a case-by-case study. Because of \cref{lem:star-factor} we have to use 
    $D_m$'s. So first assume that $d\in D_m$, then each of the factors of $Q_d$ determine a Symplectic matrix of 
    $\Sp_{e(d)}$ which gives $\id$ when raised to the power $M$. Since $Q_d$ has in total 
    $\phi(d)/e(d)$ many irreducible factors, this justifies the first factor
    in the product. 

    Next, suppose $d\not\in D_m$. Then for any $g\neq g^*$, a factor of $Q_d$, we need to pair up $g$
    and $g^*$ to get a conjugacy class of $\Sp_{2e(d)}$, which contributes to the counting.
    Since $Q_d$ is $*$-symmetric, $g$ and $g^*$ occur together in the 
    factorization of $Q_d$. Furthermore, $Q_d$ is separable and hence all the irreducible factors
    are coprime to each other. Since $g$  and $g^*$ are combined together, the exponent of $z$ is taken 
    to be 
    $2e(d)$. But in the process, we are grouping $g$ and $g^*$ together, resulting in the power of 
    the factor being $\phi(d)/2e(d)$. This finishes the proof.
\end{proof}
\begin{corollary}\label{cr:semsp-Sp-elements}
        Let $(M,q)=1$ and $\G_n=\Sp_{2n}(q)$. Let $b_n$ denote the proportion of
    semisimple $M$-th roots of identity in $\G_n$. Then we have that 
    \begin{align*}
    \begin{split}
        1+\sum\limits_{n=1}^{\infty} b_nz^n&=\left(1+\sum\limits_{m=1}^{\infty}
       \dfrac{u^m}{|\Sp_{2m}(q)|}\right)^{o(M)}
       \prod\limits_{m=1}^{\infty}\left(\prod\limits_{\substack{d|M\\ d\in D_m}}\left(1+\sum\limits_{m=1}^{\infty}\dfrac{z^{me(d)}}{|\U_m(q^{2e(d)})|}\right)^{\frac{\phi(d)}{e(d)}}\right.\\
       &\times\left.
       \prod\limits_{\substack{d|M\\d\not\in D_m\\ d\neq 1,2}}\left(1+\sum\limits_{m=1}^{\infty}\dfrac{z^{me(d)}}{|\GL_m(q^{e(d)})|}\right)^{\frac{\phi(d)}{2e(d)}}\right)
    \end{split}  
    \end{align*}
    where $e(d)$ denotes the multiplicative order of $q$ in $\Z/d\Z^\times$, and $o(M)=2$ if $2|M$ and $1$ otherwise. 
\end{corollary}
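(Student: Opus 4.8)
The plan is to upgrade the class count of \cref{lem:gen-sem-Sp} to a count weighted by class size, since the proportion of semisimple $M$-th roots of identity is
\[
b_n=\sum_{x}\frac{|x^{G}|}{|G|}=\sum_{x}\frac{1}{|C_{G}(x)|},
\]
the sum being over the semisimple conjugacy classes $x$ of $\Sp_{2n}(q)$ with $x^{M}=\id$. Concretely I would re-run the proof of \cref{lem:gen-sem-Sp}, but attach to each class the weight $1/|C_{G}(x)|$ instead of $1$; this is the symplectic analogue of the passage from \cref{lem:gen-sem-GL} to \cref{cr:semsp-GL-elements}, and it is governed by the cycle-index formalism of \cite{Ful99CycInd}.

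First I would recall that a semisimple class of $\Sp_{2n}(q)$ is the datum $\lambda\colon\Phi\to\pa^{2n}$ in which every $\lambda_{\phi}$ has the shape $(1^{k_{\phi}})$, and that $x^{M}=\id$ forces each $\phi$ with $\lambda_{\phi}\neq0$ to be an irreducible factor of $x^{M}-1=\prod_{d\mid M}Q_{d}(x)$. By the central-join (primary) decomposition following \cite{ta1}, such an $x$ is an orthogonal sum of one block per surviving $\phi$, and \cref{lem:centralizer-sizes-symp} shows that $|C_{G}(x)|=\prod_{\phi}B(\phi)$ splits accordingly. For a semisimple block only the part $\mu=1$ occurs, so the prefactor $Q^{\sum_{\mu<\nu}\mu m_\mu m_\nu+\frac12\sum_\mu(\mu-1)m_\mu^{2}}$ is $1$ and $B(\phi)=A(\phi^{1})$ is, respectively,
\begin{align*}
|\Sp_{2k}(q)|\ \text{for}\ \phi=x\pm1,\qquad |\U_{k}|\ \text{for a self-dual}\ \phi=\phi^{*},\qquad |\GL_{k}(q^{e(d)})|\ \text{for a dual pair}\ \{g,g^{*}\},
\end{align*}
the last coming from the two half-orders $|\GL_{k}(Q)|^{1/2}$ attached to $g$ and $g^{*}$. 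Since both the dimension (recorded by the exponent of $z$) and the weight $1/|C_{G}(x)|$ are multiplicative over these blocks, the generating function factors as a product of one local factor for each irreducible-factor-type of $x^{M}-1$.

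It then remains to read off the local factors. The eigenvalues $\pm1$ carry the symplectic-signed-partition constraint that the part $1$ has even multiplicity, so the $(\pm1)$-block is itself a symplectic space and contributes $1+\sum_{m\ge1}z^{m}/|\Sp_{2m}(q)|$; this factor appears for $\phi=x-1$ always and for $\phi=x+1$ exactly when $2\mid M$, hence with total exponent $o(M)$. For $d\mid M$ with $d\neq1,2$ I would invoke \cref{lem:star-factor}: when $d\in D_{m}$ the polynomial $Q_{d}$ splits into $\phi(d)/e(d)$ self-dual (SRIM) factors, each yielding a unitary block and hence a factor $\bigl(1+\sum_{k\ge1}z^{ke(d)}/|\U_{k}(q^{2e(d)})|\bigr)^{\phi(d)/e(d)}$; when $d\notin D_{m}$ the $\phi(d)/e(d)$ factors of $Q_{d}$ pair into $\phi(d)/2e(d)$ dual pairs, each yielding a $\GL_{k}(q^{e(d)})$ block and a factor $\bigl(1+\sum_{k\ge1}z^{ke(d)}/|\GL_{k}(q^{e(d)})|\bigr)^{\phi(d)/2e(d)}$. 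Multiplying the $\Sp$, $\U$, and $\GL$ factors over all admissible $d$ gives the asserted product.

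The main obstacle will be the precise reading of \cref{lem:centralizer-sizes-symp} in the three regimes, and in particular the bookkeeping of the exponent of $z$ against the order of the local centralizer: one must check that the prefactor genuinely collapses for semisimple blocks, that the two occurrences $|\GL_{k}(Q)|^{1/2}$ for $g$ and $g^{*}$ combine into a single $|\GL_{k}(q^{e(d)})|$, and — most delicately — that the unitary block for a self-dual $\phi$ is taken over the subfield dictated by Wall's $|\phi|$, so that its contributed dimension and the field of $\U_{k}$ match the exponents appearing in the statement. Once this local dictionary between factor-types of $x^{M}-1$ and classical groups is fixed, the product factorization is forced and the remaining manipulations are routine.
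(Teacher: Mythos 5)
Your proposal is correct and is essentially the paper's own argument: the paper likewise weights the semisimple classes of \cref{lem:gen-sem-Sp} by the reciprocal centralizer orders from \cref{lem:centralizer-sizes-symp}, splits into the three block types (eigenvalues $\pm 1$ giving symplectic factors with total exponent $o(M)$, SRIM factors of $Q_d$ for $d\in D_m$ giving unitary factors, and dual pairs $\{g,g^*\}$ for $d\notin D_m$ giving $\GL$ factors), though the paper records this only as a brief sketch while you spell out the block-multiplicativity and the collapse of Wall's prefactor for semisimple data. No gap; your flagged bookkeeping points (the half-orders $|\GL_k(Q)|^{1/2}$ combining, and the field of the unitary block) are exactly the delicate spots, and you resolve them as the paper implicitly does.
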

\begin{proof}
    Start with the case when $M$ is even. In this case, both $\id$ and $\id$ 
    are $M$-th roots of identity. Now let us assume that $A$ is an $M$-th root 
    of identity, such that
    $\pm1$ is not an eigenvalue of $A$. Then the characteristic polynomial
    of $A$ is a product of factors of $Q_d$, where $d|M$ and the polynomials are irreducible
    of degree more than $1$. The rest of the proof follows from the description of the
    centralizers when the irreducible factor is a SRIM polynomial or not. In the latter case,
    we have that $g\neq g^*$ occurs together in the factorization of the 
    characteristic polynomial of $A$. For $M$ being even $-\id$ is not an $M$-th root of identity,
    which proves the existence of the function $o(M)$ in the formula. This finishes the proof.
\end{proof}
\begin{corollary}\label{cr:semsp-Or-elements}
        Let $(M,q)=1$ and $\G_{2n}=\On{\ep}_{2n}(q)$ and $\G_{2n+1}=\On{}_{2n+1}(q)$, where $\ep=\pm$. Let $b_n^{\ep}$ and $b_n$ denote the proportion of
    semisimple $M$-th roots of identity in $\G_{2n}$ and $\G_{2n+1}$ 
    respectively. 
    Let us define
    \begin{align*}
        B_{+}(z)=1+\sum\limits_{n=1}^{\infty} b_n^+z^n,
        B_{-}(z)=\sum\limits_{n=1}^{\infty} b_n^{-}z^n,
        B(z)=1+\sum\limits_{n=1}^{\infty}b_nz^n.
    \end{align*}    
    Then we have that
    \begin{align*}
        &B_+(z^2)+B_{-}(z^2)+2zB(z^2)\\
    =&\left(1+\sum\limits_{m=1}^{\infty}\left(\dfrac{1}{|\On{+}_{2m}(q)|}+\dfrac{1}{|\On{-}_{2m}(q)|}\right)z^{2m}+
    z\left(1+\sum\limits_{m=1}^{\infty}\dfrac{z^{2m}}{|\Sp_{2m}(q)|}\right)\right)^{o(M)}\\
       \times &\prod\limits_{m=1}^{\infty}\left(\prod\limits_{\substack{d|M\\ d\in D_m}}\left(1+\sum\limits_{m=1}^{\infty}\dfrac{z^{2me(d)}}{|\U_m(q^{2e(d)})|}\right)^{\frac{\phi(d)}{e(d)}}
       \prod\limits_{\substack{d|M\\d\not\in D_m\\ d\neq 1}}\left(1+\sum\limits_{m=1}^{\infty}\dfrac{z^{2me(d)}}{|\GL_m(q^{e(d)})|}\right)^{\frac{\phi(d)}{2e(d)}}\right)
    \end{align*}
    and
    \begin{align*}
        &B_{+}(z^2)-B_{-}(z^2)\\
        =&\left(1+\sum\limits_{m=1}^{\infty}\left(\dfrac{1}{|\On{+}_{2m}(q)|}-\dfrac{1}{|\On{-}_{2m}(q)|}\right)z^{2m}\right)^{o(M)}\\
        \times &\prod\limits_{m=1}^{\infty}\left(\prod\limits_{\substack{d|M\\ d\in D_m}}\left(1+\sum\limits_{m=1}^{\infty}\dfrac{(-1)^mz^{2me(d)}}{|\U_m(q^{2e(d)})|}\right)^{\frac{\phi(d)}{e(d)}}
       \prod\limits_{\substack{d|M\\d\not\in D_m\\ d\neq 1}}\left(1+\sum\limits_{m=1}^{\infty}\dfrac{z^{2me(d)}}{|\GL_m(q^{e(d)})|}\right)^{\frac{\phi(d)}{2e(d)}}\right)
    \end{align*}
    where $e(d)$ denotes the multiplicative order of $q$ in $\Z/d\Z^\times$, and $o(M)=2$ if $2|M$ and $1$ otherwise. 
\end{corollary}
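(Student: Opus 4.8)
The plan is to mirror the proof of \cref{cr:semsp-Sp-elements}, adding the bookkeeping needed to separate the even orthogonal groups by type $\epsilon=\pm$ and to fold in the odd-dimensional groups. Let $A$ be a semisimple $M$-th root of identity on the orthogonal space $(V,Q)$. As $(M,q)=1$, the characteristic polynomial of $A$ divides $x^M-1=\prod_{d\mid M}Q_d(x)$, so $V$ is an orthogonal direct sum $\bigoplus_{d\mid M}V_d$, where $A$ acts on $V_d$ with primitive $d$-th roots of unity as eigenvalues. On $V_1$ (eigenvalue $+1$), and on $V_2$ (eigenvalue $-1$) when $2\mid M$, the map $A$ is scalar and $V_d$ is a genuine orthogonal space, so this block contributes an orthogonal group to the centralizer; for $d>2$ the analysis of \cref{lem:star-factor} and of the proof of \cref{lem:gen-sem-Sp} applies verbatim: if $d\in D_m$ the irreducible factors of $Q_d$ are $*$-symmetric and the block contributes a unitary group $\U_m(q^{2e(d)})$, whereas if $d\notin D_m$ they pair as $g,g^*$ and the block contributes $\GL_m(q^{e(d)})$. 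Feeding the centralizer orders of \cref{centra-size-on} into the cycle index, the proportion of admissible classes is a sum of reciprocal centralizer orders; since centralizers factor over the blocks and dimensions add, the generating function becomes an Euler product over blocks, the multiplicities $\phi(d)/e(d)$ and $\phi(d)/2e(d)$ counting the irreducible factors of $Q_d$ and the pairs $\{g,g^*\}$ respectively.

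The one genuinely new point is to track the type $\tau(V)$, which is multiplicative over orthogonal direct sums. I would first record the type of each block: a paired $\GL$-block is hyperbolic and has type $+1$, a $*$-symmetric unitary block of rank $m$ has type $(-1)^m$, and the scalar blocks $V_1,V_2$ carry genuine type $\pm$. Two specializations then isolate the two sides. \emph{Summing over the form-classes of each block} (type-blind) makes each even scalar block contribute $|\On{+}_{2m}(q)|^{-1}+|\On{-}_{2m}(q)|^{-1}$ and each odd scalar block contribute its reciprocal order twice (the two inequivalent odd forms give isomorphic groups); rewriting the odd contribution through $|\On{}_{2m+1}(q)|=2\,|\Sp_{2m}(q)|$ produces the factor $z\big(1+\sum_m z^{2m}/|\Sp_{2m}(q)|\big)$, and tracking total even dimension by $z^2$ and total odd dimension by $z\cdot z^{2m}$ yields the left side $B_+(z^2)+B_-(z^2)+2zB(z^2)$ together with the first product. \emph{Weighting by $\tau(V)=\pm1$} leaves the $\GL$-blocks alone, inserts the sign $(-1)^m$ on the unitary blocks, turns the even scalar block into $|\On{+}_{2m}(q)|^{-1}-|\On{-}_{2m}(q)|^{-1}$, and kills every odd scalar block, because its two forms have opposite type and cancel; extracting the even total dimension then gives $B_+(z^2)-B_-(z^2)$ and the second product. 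In both identities the exponent $o(M)$ records that the scalar factor occurs once (for $+1$) when $M$ is odd and twice (for $\pm1$) when $M$ is even.

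The main obstacle is precisely the type computation: establishing that paired blocks are hyperbolic, that a $*$-symmetric block of rank $m$ has type $(-1)^m$, and that the two odd scalar forms carry opposite type, so that they survive (doubled) in the type-blind sum but cancel in the type-weighted sum. This is the only step where the definition of $\tau(V)$ and its dependence on $q\bmod 4$ must be used in earnest, and it is what distinguishes the orthogonal computation from the symplectic one in \cref{cr:semsp-Sp-elements}. Once the block types are in hand, expanding the two Euler products, matching the multiplicities $\phi(d)/e(d)$ and $\phi(d)/2e(d)$, and confirming the value of $o(M)$ is the same routine bookkeeping as before.
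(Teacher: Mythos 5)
Your proposal is correct and follows essentially the same route as the paper: the paper likewise reduces to the block analysis of \cref{cr:semsp-Sp-elements}, gets the first identity by summing type-blindly over both quadratic forms (using $|\On{}_{2m+1}(q)|=2|\Sp_{2m}(q)|$ for the odd-dimensional scalar blocks, with $o(M)$ recording whether $-1$ can occur as an eigenvalue), and gets the second by weighting with the type $\tau$ of \cref{sec:conjcent}, which it implements as the substitution $z\mapsto\pm\iota z$ in the self-conjugate factors. Your explicit block-type bookkeeping --- hyperbolic $\{g,g^*\}$ pairs of type $+1$, $*$-symmetric blocks of multiplicity $m$ of type $(-1)^m$, and cancellation of the odd scalar blocks because their two forms have opposite type --- is exactly the computation that substitution encodes, so the two arguments coincide.
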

\begin{proof}
    The proof is almost similar to \cref{cr:semsp-Sp-elements}, but care needs to be 
    taken as we are giving it in a combinatorial formula. This formula occurs 
    because of the existence of different quadratic forms. Let $X$ be an orthogonal $M$-th root of identity, preserving a non-degenerate 
    quadratic form on an $N$ dimensional vector space $V$, where $N$ is even or odd.
    Consider the characteristic polynomial of $X$, which decomposes into irreducibles
    with factors being $\varphi=\varphi^*$ and irreducible or $\psi\psi^*$, with $\psi\neq\psi^*$ irreducible. We use the centralizer information from
    \cref{sec:conjcent}. 
    
    Let us start with the first equation. The characteristic polynomial of $X$ might have root to be
    $1$ or $-1$. But, if $M$ is odd, it will not have $-1$ as a root. This justifies the presence of $o(M)$. Note that for $N$ being even and positive
    the coefficient of $z^N$ is sum of the probabilities in case of $\On{+}_N(q)$
    and $\On{-}_{N}(q)$. Furthermore for odd $N$ this is two times the probability in the case of $\On{}_N(q)$, which occurs because of two quadratic forms.
    In case of odd $N$, note that $|\On{}_N(q)|=2|\Sp_{N-1}(q)|$, which we are using in the formula. This proves the equality in the first equation.

    We prove the next equation by modifying the first equation. We need the value of
    type of an orthogonal space, as was discussed in \cref{sec:conjcent}. This consideration doesn't alter the first factor, as they correspond to the case whether 
    $\id$ and $-\id$ are $M$-th roots of identity or not.
    In the right side of the first equation, for the self conjugate irreducible factors,
    replace $z$ by $\pm i z$, according to their types. This proves the second equality. 
\end{proof}
Let us now deduce the SCIM factors of $x^M-1$ in $\F_{q^2}$. According to
\cite[Lemma 2]{Enn62}, if $f$ is a SCIM polynomial of odd degree $d$, then for any root
$\zeta$ of $f$, it satisfies $\zeta^{q^d+1}=1$. This means that $\ord(f)|q^d+1$.
For a positive integer $m$ define
\begin{align*}
    \D_m=\{y\in\mathbb{Z}_{>0}:y|q^{2m+1}+1, y\nmid q^t+1\text{ for odd }1\leq t<2m+1\}.
\end{align*}
It is evident that if $f$ is a SCIM polynomial of degree $d$, then $\ord(f)\in \D_{(d-1)/2}$. 
If $d\in D_m$, and $\beta $ is a primitive $d$-th root of unity, then the set 
$\{\beta, \beta^{q^2}, \beta^{q^4}, \cdots, \beta^{q^{2(d-1)}}\}$ is of cardinality $d$. Indeed,
if $\beta^{q^{2i}}=\beta^{q^{2j}}$ for some $0\leq i<j \leq d-1$, then $d|q^{2(j-i)}-1$. Hence 
$2(j-i)$ is an even multiple of $d$, which forces $2(j-i)=0$, see \cite[Prposition 1]{YuMu04}.
For $e\in \D_m$ and a primitive $e$-th root of unity, let us define
\begin{align*}
    f_{\beta}(x)=\prod\limits_{i=0}^{d-1}\left(x-\beta^{q^{2i}}\right).
\end{align*}
Then by \cite[Theorem 3.4.8]{LingXing04} this is an irreducible polynomial. We claim that $f_\beta$ is 
a SCIM polynomial. Note that
 $\left(\beta^{q^{2j}}\right)^{-q}=\beta^{q^{2(m+i+1)}}$, since $\beta^{q^{2m+1}+1}=1$.
 This implies that $\left(\beta^{q^{2j}}\right)^{-q}$ is also a root, and hence
$f_{\beta}$ is a SCIM polynomial, see \cite[pp. 23]{FuNePr05}. Using \cite[Theorem 2.47]{RuNi97FiniteField}, we conclude that for an irreducible polynomial $f$ of degree $2d+1$ over $\F_{q^2}$, the following
statements are equivalent;
\begin{enumerate}
    \item $f$ is SCIM,
    \item $\ord(f)\in \D_{d}$,
    \item $f=\widetilde{f}_{\beta}$ for some $\beta$ being a $t$-th root of unity for some $t\in \D_{d}$.
\end{enumerate}
For $t\in \D_{d}$, we have that $Q_d(x)=\prod(x-\gamma)$, where $\gamma$ runs over all $t$-th primitive 
root of unity. It follows that in this case $Q_d$ factors into a product of all SCIM polynomials. Using a
similar argument as in \cref{lem:star-factor} we conclude the following.
\begin{lemma}\label{lem:tilde-factor}
The polynomial $Q_d(x)$ has a (and hence all) $\sim$-symmetric irreducible factor
if and only if $d\in \D_m$ for some $m$. Furthermore, if $e(d)$ is even, $Q_d$ has no SCIM factor.
\end{lemma}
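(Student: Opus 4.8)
The plan is to mirror the proof of \cref{lem:star-factor}, replacing the reciprocal duality $g\mapsto g^*$ over $\fq$ by the Hermitian duality $g\mapsto\widetilde g$ over $\fqn{2}$, and the sets $D_m$ by the sets $\D_m$. First I would record that $Q_d$ is itself $\sim$-symmetric. Its roots are exactly the primitive $d$-th roots of unity, and since $(q,d)=1$ the map $\zeta\mapsto\overline\zeta^{\,-1}=\zeta^{-q}$ permutes this set (because $\gcd(-q,d)=1$, a primitive $d$-th root is sent to a primitive $d$-th root). As the roots of $\widetilde{Q_d}$ are precisely $\{\overline\zeta^{\,-1}\}$, this gives $\widetilde{Q_d}=Q_d$. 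Consequently every irreducible factor of $Q_d$ over $\fqn{2}$ is either $\sim$-symmetric (i.e.\ SCIM) or occurs in a pair $\{g,\widetilde g\}$ with $\widetilde g\neq g$; in particular $Q_d$ has one SCIM factor exactly when all of its factors are SCIM, which is the parenthetical assertion.

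For the main equivalence I would argue as in \cref{lem:star-factor}. If $d\in\D_m$ for some $m$, then by the chain of equivalences established immediately above the statement (combining \cite[Theorem~2.47]{RuNi97FiniteField} with the explicit SCIM polynomials $\widetilde f_\beta$), $Q_d$ splits as a product of SCIM polynomials, so it certainly has a SCIM factor. Conversely, suppose $f$ is a SCIM factor of $Q_d$. Since the roots of $Q_d$ are primitive $d$-th roots of unity, every root of $f$ has order $d$, so $\ord(f)=d$; and because $f$ is SCIM, the equivalence ``$(1)\Leftrightarrow(2)$'' yields $\ord(f)\in\D_m$ for some $m$, whence $d\in\D_m$. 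This settles the biconditional.

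For the \textbf{furthermore} I would isolate a degree obstruction. A SCIM polynomial necessarily has odd degree: if $f$ is irreducible over $\fqn{2}$ with a root $\zeta$, its roots form the Frobenius orbit $\{\zeta^{q^{2i}}\}$, and $\sim$-symmetry means this orbit is stable under $\sigma\colon\zeta\mapsto\zeta^{-q}$; since $\sigma^2$ sends $\zeta$ to $\zeta^{q^2}$, it is the $q^2$-Frobenius, which acts as a single $(\deg f)$-cycle on the roots, and a permutation can have a full cycle as its square only on an odd number of points. On the other hand, every irreducible factor of $Q_d$ over $\fqn{2}$ has the same degree, equal to the multiplicative order of $q^2$ modulo $d$ (the analogue of \cref{lem-cyclo-factor} with $q$ replaced by $q^2$). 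Comparing parities, a SCIM factor can exist only when this common degree is odd; once it is even, no factor of $Q_d$ can be $\sim$-symmetric, which is the asserted ``furthermore.''

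The step I expect to be most delicate is exactly this last one: translating the odd-degree requirement for a SCIM factor into the stated parity condition on $e(d)$. Concretely, the factors of $Q_d$ over $\fqn{2}$ have degree $\ord_d(q^2)=e(d)/\gcd(e(d),2)$, while the defining relation $d\mid q^{2m+1}+1$ for $d\in\D_m$ forces $q^{2m+1}\equiv-1\pmod d$ and hence $e(d)\equiv 2\pmod 4$; keeping these two computations in register — the degree over the quadratic extension versus the order $e(d)=\ord_d(q)$ over $\fq$, together with the nonexistence of an odd $t$ with $q^t\equiv-1$ when $e(d)$ is odd — is where all the care is needed. Everything else is routine once the $\sim$-symmetry of $Q_d$ and the SCIM equivalences from the preceding discussion are in hand.
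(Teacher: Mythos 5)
Your proposal is correct and takes essentially the same route as the paper, whose proof of this lemma is precisely to rerun the argument of \cref{lem:star-factor} with $g\mapsto\widetilde{g}$ and $\D_m$ replacing $g\mapsto g^*$ and $D_m$: the $\sim$-symmetry of $Q_d$ gives the SCIM-or-paired dichotomy, the backward direction comes from the pre-lemma chain of equivalences (SCIM $\Leftrightarrow$ $\ord(f)\in\D_m$ $\Leftrightarrow$ $f=f_\beta$), and the forward direction from $\ord(f)=d$ for any factor $f$ of $Q_d$. The delicate point you flag at the end is a genuine wrinkle in the paper's notation rather than in your argument: the ``furthermore'' is true with $e(d)$ read as the common degree $\ord_d(q^2)$ of the irreducible factors of $Q_d$ over $\fqn{2}$ (SCIM polynomials have odd degree, which you, unlike the paper, actually prove via the cycle-square argument), whereas under the paper's global convention $e(d)=\ord_d(q)$ the condition $d\in\D_m$ itself forces $e(d)\equiv 2\pmod 4$, so your reconciliation through $\ord_d(q^2)=e(d)/\gcd(e(d),2)$ is exactly the right reading.
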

\begin{proposition}\label{lem:gen-sem-Un}
    Let $(M,q)=1$ and $G=\U_{n}(q^2)$. Let $a_n$ denote the number of
    semisimple conjugacy classes
    $x_{n,i}^{G}$, such that $x_{n,i}^M=\id_n$ for some indices $i$. Then we have that
    \begin{equation}
        1+\sum\limits_{n=1}^{\infty} a_nz^n=\prod\limits_{m=1}^{\infty}\left(\prod\limits_{\substack{d|M\\ d\in \D_m}}\left(\dfrac{1}{1-z^{e(d)}}\right)^{\frac{\phi(d)}{e(d)}}\prod\limits_{\substack{d|M\\d\not\in \D_m}}\left(\dfrac{1}{1-z^{2e(d)}}\right)^{\frac{\phi(d)}{2e(d)}}\right)
    \end{equation}
    where $e(d)$ denotes the multiplicative order of $q$ in $\Z/d\Z^\times$.
\end{proposition}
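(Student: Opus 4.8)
The plan is to follow the proof of \cref{lem:gen-sem-Sp} almost verbatim, exchanging $*$-symmetry for $\sim$-symmetry, the sets $D_m$ for $\D_m$, and SRIM factors for SCIM factors. First I would observe that if $A\in\U_n(q^2)$ is semisimple with $A^M=\id$, then the hypothesis $(M,q)=1$ makes $x^M-1$ separable and forces the minimal polynomial of $A$ to be a squarefree divisor of $x^M-1=\prod_{d\mid M}Q_d(x)$; hence the characteristic polynomial of $A$ is a product of irreducible factors of the various $Q_d$ taken over $\F_{q^2}$. Since the $Q_d$ for distinct $d\mid M$ are pairwise coprime, the contributions of the different $d$ multiply, and it suffices to analyse a single $Q_d$ and then take the product over $d\mid M$.

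The combinatorial heart of the argument is to split the factors of each $Q_d$ according to the $\sim$ operation, which is exactly what \cref{lem:tilde-factor} provides. If $d\in\D_m$ for some $m$, every irreducible factor of $Q_d$ is SCIM, hence already $\sim$-irreducible; there are $\phi(d)/e(d)$ such factors, and because $A$ is semisimple each factor carries a partition of the shape $1^k$, so summing the geometric series $\sum_{k\ge0}z^{ke(d)}=(1-z^{e(d)})^{-1}$ over the $\phi(d)/e(d)$ factors produces the first factor $\bigl((1-z^{e(d)})^{-1}\bigr)^{\phi(d)/e(d)}$. If instead $d\notin\D_m$ for every $m$, then $Q_d$ has no $\sim$-symmetric irreducible factor, so its factors occur in genuine pairs $\{g,\widetilde{g}\}$ with $g\neq\widetilde{g}$; each such pair fuses into one $\sim$-irreducible polynomial of degree $2e(d)$, there are $\phi(d)/(2e(d))$ of them, and the identical bookkeeping with exponent $2e(d)$ gives the second factor $\bigl((1-z^{2e(d)})^{-1}\bigr)^{\phi(d)/(2e(d))}$. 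Multiplying these per-$d$ contributions, organised by the index $m$ through the membership $d\in\D_m$, reassembles the claimed product.

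The step I expect to need the most care is the degree bookkeeping, and this is really the only place where the unitary case diverges from the symplectic one. Because $\U_n(q^2)$ lives over $\F_{q^2}$, the quantity $e(d)$ controlling every exponent must be read as the common degree over $\F_{q^2}$ of the irreducible factors of $Q_d$ (equivalently, the multiplicative order of $q^2$ modulo $d$ rather than of $q$); with that reading \cref{lem:tilde-factor} delivers the clean dichotomy I used, namely SCIM factors precisely when $d\in\D_m$ (forcing $e(d)$ odd) and paired factors otherwise, and the two regimes are disjoint and exhaust all relevant $d$ because any $d$ admitting a self-$\sim$ factor lies in a unique $\D_m$. The one boundary point to keep an eye on is the self-$\sim$ degree-one factors $x-a$ with $a^{q+1}=1$ (the unitary analogue of the eigenvalues $\pm1$ in $\Sp$, sitting at the extreme index $m=0$); these are the counterpart of the $d\in\{1,2\}$ contributions in \cref{lem:gen-sem-Sp}, and the plan is to check that they are absorbed by the same indexing once the degree convention over $\F_{q^2}$ is fixed, rather than requiring a separate mechanism.
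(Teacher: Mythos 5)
Your proposal is correct and follows essentially the same route as the paper's proof: reduce via separability to the factorization $x^M-1=\prod_{d\mid M}Q_d(x)$, invoke \cref{lem:tilde-factor} to split each $Q_d$ into SCIM factors (when $d\in\D_m$) or fused pairs $\{g,\widetilde{g}\}$ counted with doubled degree (otherwise), and sum a geometric series for each $\sim$-irreducible factor. One point is worth stressing: the degree convention you flag in your last paragraph is not mere caution but a genuine correction that the paper's proof silently elides --- the exponents must indeed use the order of $q^2$ modulo $d$ (the common degree over $\F_{q^2}$ of the irreducible factors of $Q_d$), not the order of $q$ fixed by the paper's global notation, since for $d\in\D_m$ one has $\ord_d(q)=2(2m+1)$ even while $Q_d$ splits over $\F_{q^2}$ into $\phi(d)/(2m+1)$ SCIM factors of odd degree $2m+1$; concretely, for $q=3$ and $d=4\in\D_0$, $Q_4=x^2+1$ splits over $\F_9$ into two linear SCIM factors $x\mp i$, so the literal convention would give the factor $(1-z^2)^{-1}$ instead of the correct $(1-z)^{-2}$, contradicting the count of four classes with $x^4=\id$ in $\U_1(9)$. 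With that reading your two regimes are exhaustive and disjoint exactly as in the paper's argument, and your closing check is also right: the linear factors $x-a$ with $a^{q+1}=1$ are absorbed into the $\D_0$ bucket with no separate mechanism, since unitary conjugacy classes carry no signed-partition parity constraints analogous to the symplectic $d=1,2$ cases.
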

\begin{proof}
    Let $A\in\U_{n}(q^2)$ be a matrix satisfying $A^M=\id$. Then we have that, the minimal polynomial of $A$ over $\F_{q^2}$
    should divide $x^M-1$. Since $x^M-1=\prod\limits_{d|M}Q_d(x)$ and $Q_d(x)$ further decomposes
    into irreducibles we examine this each $d$ case-by-case depending on whether
    $d$ is inside or outside $\D_m$ for some $m$, and use \cref{lem:tilde-factor}. So first assume that $d\in \D_m$, then each of the factors of $Q_d$ determine a Unitary matrix of 
    $\U_{e(d)}(q^2)$ which gives $\id$ when raised to the power $M$. Since $Q_d$ has in total 
    $\phi(d)/e(d)$ many irreducible factors, this justifies the first factor
    in the product. 

    Next, suppose $d\not\in D_m$. Then for any $g\neq \widetilde{g}$, a factor of $Q_d$, we need to pair up $g$
    and $g^*$ to get a conjugacy class of $\U_{e(d)}(q^2)$, which contributes to the counting.
    Since $Q_d$ is $\sim$-symmetric (because $\alpha$ and $\alpha^{-q}$ are roots of $Q_d$, as $(d,M)=1$), $g$ and $\widetilde{g}$ occur together in the 
    factorization of $Q_d$. Furthermore, $Q_d$ is separable and hence all the irreducible factors
    are coprime to each other. Hence the exponent of $z$ is taken 
    to be 
    $2e(d)$. But in the process, we are grouping $g$ and $g^*$, resulting in the power of 
    the factor being $\phi(d)/2e(d)$. This finishes the proof.
\end{proof}
\begin{corollary}\label{cr:semsp-U-elements}
        Let $(M,q)=1$ and $\G_n=\U_n(q^2)$. Let $b_n$ denote the proportion of
    semisimple $M$-th roots of identity in $\G_n$. Then we have $1+\sum\limits_{n=1}^{\infty} b_nz^n$ to be equal to
    \begin{equation}
       \prod\limits_{m=1}^{\infty}\left(\prod\limits_{\substack{d|M\\ d\in \D_m}}\left(1+\sum\limits_{m=1}^{\infty}\dfrac{z^{me(d)}}{|\U_m(q^{2e(d)})|}\right)^{\frac{\phi(d)}{e(d)}}\prod\limits_{\substack{d|M\\d\not\in \D_m}}\left(1+\sum\limits_{m=1}^{\infty}\dfrac{z^{2me(d)}}{|\GL_m(q^{2e(d)})|}\right)^{\frac{\phi(d)}{2e(d)}}\right)
    \end{equation}
    where $e(d)$ denotes the multiplicative order of $q$ in $\Z/d\Z^\times$.
\end{corollary}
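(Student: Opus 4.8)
The plan is to upgrade the class-counting argument of \cref{lem:gen-sem-Un} to a proportion-counting argument by weighting each semisimple conjugacy class $C$ of $\G_n = \U_n(q^2)$ satisfying $C^M = \id$ by its proportion $|C|/|\G_n| = 1/|C_{\G_n}(x_C)|$, where $x_C$ is a representative and the last equality is orbit--stabilizer. Thus $b_n = \sum_C 1/|C_{\G_n}(x_C)|$, the sum ranging over semisimple classes with $x_C^M = \id$, and the whole task reduces to expressing the centralizer order as a product of local contributions indexed by the $\sim$-irreducible factors of the characteristic polynomial. This parallels the symplectic computation, but the bookkeeping is cleaner here.

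First I would recall, exactly as in \cref{lem:gen-sem-Un}, that such a class is encoded by a function $\lambda\colon \widetilde\Phi \to \mathcal P_n$ supported on the $\sim$-irreducible factors of $x^M - 1 = \prod_{d\mid M} Q_d(x)$, and that semisimplicity forces each nonzero $\lambda(\phi)$ to be of the form $(1^{k_\phi})$. Since distinct cyclotomic polynomials $Q_d$ are coprime and each $Q_d$ is separable, the supporting factors are pairwise distinct, so the datum is simply an independent choice of a multiplicity $k_\phi \ge 0$ for each $\sim$-irreducible $\phi$ dividing some $Q_d$, subject to $\sum_\phi k_\phi \deg\phi = n$. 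Invoking \cref{lem:tilde-factor}, these factors split into two families: for $d \in \D_m$ the polynomial $Q_d$ contributes its $\phi(d)/e(d)$ SCIM factors, each of degree $e(d)$; for $d \notin \D_m$ it contributes $\phi(d)/2e(d)$ conjugate pairs $g\widetilde g$, each a $\sim$-irreducible factor of degree $2e(d)$.

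The decisive step is the factorization of the centralizer. For a semisimple element the Wall centralizer data recalled in \cref{sec:conjcent} (all parts equal to $1$) makes $|C_{\G_n}(x_C)|$ multiplicative over the $\sim$-irreducible factors: the $\phi$-isotypic block of a SCIM factor of degree $e(d)$ and multiplicity $k$ contributes a unitary group $\U_k(q^{2e(d)})$ over the residue field extension $\F_{q^{2e(d)}}$, while a conjugate pair $g\widetilde g$ with the same parameters contributes $\GL_k(q^{2e(d)})$. Consequently the generating function factors as an infinite product of local factors, one per $\sim$-irreducible $\phi$: each SCIM factor of degree $e(d)$ yields $1 + \sum_{k\ge 1} z^{k e(d)}/|\U_k(q^{2e(d)})|$, and each conjugate pair yields $1 + \sum_{k\ge 1} z^{2k e(d)}/|\GL_k(q^{2e(d)})|$. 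Collecting the $\phi(d)/e(d)$ SCIM factors and the $\phi(d)/2e(d)$ paired factors of each $Q_d$, and taking the product over $d\mid M$ and over $m$, yields the stated identity; note that the eigenvalues $\pm 1$ need no separate treatment, since $x-1 = Q_1$ and $x+1 = Q_2$ are themselves SCIM and fall under $d \in \D_0$, so no analogue of the factor $o(M)$ from \cref{cr:semsp-Sp-elements} appears.

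The main obstacle I anticipate is pinning down the centralizer identification precisely --- in particular, verifying that the Hermitian form restricted to a SCIM-isotypic block descends to a nondegenerate Hermitian form over $\F_{q^{2e(d)}}$ (giving $\U_k$), whereas a conjugate pair $g\widetilde g$ carries a nondegenerate pairing between the two dual blocks (giving $\GL_k$, with no extra square-root exponent of the kind appearing in the symplectic formula of \cref{lem:centralizer-sizes-symp}). This is the only place where the specifically unitary geometry enters, and it is exactly what fixes the exponents $e(d)$ versus $2e(d)$ and the field $q^{2e(d)}$ that distinguish this formula from its symplectic and orthogonal counterparts.
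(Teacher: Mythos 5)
Your proposal is correct and takes essentially the same route as the paper, whose own proof simply declares the reasoning identical to \cref{cr:semsp-GL-elements}, \cref{cr:semsp-Sp-elements} and \cref{cr:semsp-Or-elements}: weight each semisimple class produced by \cref{lem:gen-sem-Un} by the reciprocal of its centralizer order from Wall's data, with SCIM factors (the $d\in\D_m$ case of \cref{lem:tilde-factor}) contributing unitary blocks $\U_k(q^{2e(d)})$ and conjugate pairs $\{g,\widetilde g\}$ contributing $\GL_k(q^{2e(d)})$. Your closing observation that $x\mp 1=Q_1,Q_2$ are themselves SCIM with $1,2\in\D_0$, so that no analogue of the $o(M)$ factor arises, correctly fills in the one structural point where the unitary case diverges from the symplectic and orthogonal ones.
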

\begin{proof}
    This follows easily from the information on centralizers and size of the conjugacy classes.
    The flow of reasoning is as same as \cref{cr:semsp-GL-elements},
     \cref{cr:semsp-Sp-elements} and \cref{cr:semsp-Or-elements}. Hence it is left to the reader. 
\end{proof}
\subsection{Unipotent classes} Recall that an element $x\in G(q)$ is a unipotent element
if and only if $\ord(x)$ is a power of the defining prime.
Hence to find all unipotent elements of order $M$, it is enough to find elements 
of order $p^r$ where $M=t\cdot p^r$ and $p\nmid t$. In the following result, we 
find the conjugacy classes of unipotent elements of order $p^r$ when $G=\GL_n$. 
In the other finite groups of Lie type two unipotent elements $u_1$ and $u_2$
need not be $G$-conjugate, even if they are $\GL_n$ conjugate. But it so happens that
the unipotent conjugacy class in other Finite groups of Lie type descend 
from those in $\GL_n$. Thus it is necessary that we first resolve the
case of $\GL_n(q)$. The treatments for the other groups will follow easily.
\begin{proposition}\label{prop:order-of-unipotent}
    Let $\lambda=\lambda_1^{i_1}\lambda_2^{i_2}\ldots\lambda_k^{i_k}$ be a partition of 
    $|\lambda|$, where $\lambda_1>\lambda_2>\ldots>\lambda_k$. 
    Consider $J_{\lambda}$ to be the matrix corresponding to the Jordan 
    canonical form of the unipotent matrix attached with $\lambda$. Then 
    $J_{\lambda}$ is of order $p^r$ if and only if $p^{r-1}<\lambda_1\leq p^r$.
\end{proposition}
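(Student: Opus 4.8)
The plan is to reduce the computation to the case of a single Jordan block and then exploit the characteristic-$p$ binomial identity. Since $J_\lambda$ is block diagonal, consisting of one unipotent Jordan block of size $\lambda_j$ for each distinct part (with multiplicity $i_j$), its order is the least common multiple of the orders of these blocks. As every unipotent element has order a power of $p$, each block order is a power of $p$, so the least common multiple is simply the \emph{largest} of them. Hence the order of $J_\lambda$ equals the order of a single Jordan block of size $\lambda_1$, the largest part, and it suffices to compute the order of one unipotent Jordan block as a function of its size.

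So I would fix a unipotent Jordan block $J = \id + N$ of size $s$, where $N$ is the nilpotent matrix with $N^{s} = 0$ but $N^{s-1} \neq 0$. The key observation is that $\id$ and $N$ commute, so the subalgebra they generate is commutative and of characteristic $p$; therefore the Frobenius (``freshman's dream'') identity gives
\[
J^{p^k} = (\id + N)^{p^k} = \id^{p^k} + N^{p^k} = \id + N^{p^k}
\]
for every $k \geq 0$. Consequently $J^{p^k} = \id$ if and only if $N^{p^k} = 0$, which holds precisely when $p^k \geq s$. Letting $r$ be the least integer with $p^r \geq s$, equivalently $p^{r-1} < s \leq p^r$, we obtain $J^{p^r} = \id$, while $J^{p^{r-1}} = \id + N^{p^{r-1}} \neq \id$ since $p^{r-1} < s$. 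As the order of $J$ necessarily divides $p^r$ and is a power of $p$, it must equal exactly $p^r$.

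Finally I would assemble the two steps: the order of $J_\lambda$ is the maximum over the distinct parts $\lambda_j$ of $p^{r(\lambda_j)}$, where $p^{r(s)-1} < s \leq p^{r(s)}$. Since $r(s)$ is (weakly) increasing in $s$ and $\lambda_1$ is the largest part, this maximum is attained at $\lambda_1$, giving order $p^r$ with $p^{r-1} < \lambda_1 \leq p^r$, as claimed. I do not anticipate a serious obstacle: the only points requiring care are the verification that the order of the whole matrix is governed by the largest block (which rests on all block orders being $p$-powers, so that the least common multiple collapses to a maximum), and the clean characteristic-$p$ power identity, which is exactly what makes the size-versus-order bound sharp.
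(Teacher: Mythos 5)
Your proposal is correct and rests on the same core identity as the paper's proof: the characteristic-$p$ binomial (Frobenius) identity $J^{p^k}=\id+N^{p^k}$ together with the fact that the relevant nilpotency index is the largest part $\lambda_1$. The paper applies the binomial theorem directly to the full matrix $J_\lambda$ rather than first reducing to a single block via the lcm-of-$p$-powers argument, but that reduction is purely organizational; if anything, your write-up makes explicit the steps (vanishing of intermediate binomial coefficients, sharpness at $p^{r-1}$) that the paper's terse proof leaves implicit.
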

\begin{proof}
    Note that $N_{\lambda}=J_{\lambda}-\id=(a_{ij})$ is a nilpotent matrix with the $(i,i+1)$-th entry $a_{ii+1}\in\{0,1\}$ for all 
    $1\leq i\leq |\lambda|-1$ and $a_{ij}=0$ for all $(i,j)\neq (i,i+1)$.
    Also $J_{\lambda}-\id$ is a nilpotent matrix satisfies $(J_{\lambda}-\id)^{|\lambda|}=0$. 
    Given that $J_{\lambda}\in\GL_{|\lambda|}(\F{p})$ is a nontrivial unipotent element, it must have order
    $p^t$ for some $t>0$. Using the binomial theorem,
    \begin{align*}
        J_\lambda^{p^t}=\id+\displaystyle\sum\limits_{i=1}^{p^t}{p^t \choose i}N_{\lambda}^i.
    \end{align*}
    Since $\lambda=\ell_1^{i_1}\ell_2^{i_2}\ldots\ell_k^{i_k}$, the $t$ should satisfy $p^t\geq \ell_1$. This proves both sides of the statement.
\end{proof}
\begin{lemma}\label{lem:}
    Let $Q(r,[s])$ denote the number of partitions of $r$, with parts not exceeding $s$. Then the generating function for $Q$ in indeterminate $z$ is given by
    \begin{align*}
        1+\sum\limits_{r=1}^{\infty} Q(r,[s])z^r=\prod\limits_{t=1}^{s}\dfrac{1}{1-z^t}.
    \end{align*}
\end{lemma}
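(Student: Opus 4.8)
The plan is to exploit the standard bijection between a partition and its multiplicity data. A partition of $r$ all of whose parts are at most $s$ is recorded uniquely by the tuple $(m_1,\dots,m_s)$ of nonnegative integers, where $m_t=m_t(\lambda)$ is the number of parts equal to $t$; the requirement that the partition have size $r$ translates into the single linear constraint $\sum_{t=1}^{s}t\,m_t=r$, and conversely every such tuple produces a valid partition of $r$ with parts not exceeding $s$. Hence $Q(r,[s])$ is exactly the number of solutions in nonnegative integers of $m_1+2m_2+\cdots+sm_s=r$, and the empty partition accounts for the value $Q(0,[s])=1$.

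First I would record, for each fixed $t\in\{1,\dots,s\}$, the geometric series identity
\[
\sum_{m_t\geq 0} z^{t m_t}=\frac{1}{1-z^t},
\]
understood as a formal power series in $z$, in which the exponent $t m_t$ is the total contribution to the size coming from the parts of value $t$. Next I would multiply these $s$ series together and expand
\[
\prod_{t=1}^{s}\frac{1}{1-z^t}=\prod_{t=1}^{s}\left(\sum_{m_t\geq 0}z^{t m_t}\right).
\]
Collecting terms of a fixed total degree, the coefficient of $z^r$ in the right-hand product is the number of tuples $(m_1,\dots,m_s)$ with $\sum_{t}t\,m_t=r$, which by the first paragraph is precisely $Q(r,[s])$. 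The constant term of the product equals $1$, matching the leading $1$ in the claimed formula, and comparing coefficients of $z^r$ for every $r\geq 0$ yields the asserted identity.

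I do not expect any genuine obstacle here, as this is the classical Euler-type generating function for partitions with bounded parts. The only subtlety worth a sentence is that the finite product of geometric series is a well-defined formal power series: for each degree $r$ only finitely many tuples contribute, since each $m_t\leq r/t$, so every coefficient extraction reduces to a finite sum and the interchange of sum and product is justified.
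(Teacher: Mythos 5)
Your proof is correct and complete: the multiplicity-tuple bijection, the geometric-series expansion of each factor $\frac{1}{1-z^t}$, and the coefficient comparison constitute the standard Euler argument for partitions with bounded part size. The paper itself offers no proof of this lemma at all --- it is stated as a classical fact and used later --- so there is no authorial argument to compare against; your write-up simply supplies the expected textbook justification, and your closing remark about well-definedness of the finite product as a formal power series (each coefficient being a finite sum since $m_t\leq r/t$) is the right subtlety to flag.
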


\section{Generating functions for all roots of identity}\label{sec:gen-fun}
To detect an element  $x\in \G(q)$, such that $x^M=\id$, we use the Jordan decomposition. 
First, let us write $M=t\cdot p^r$, where $p\nmid t$. Using Jordan 
decomposition (see \cite{MaTest11}), we write $x=x_sx_u$, where $x_s$ is semisimple
and $x_u$ is unipotent. It can be easily seen that $x^M=\id$ if and only if 
$x_s^t=\id$ and $x_u^{p^r}=\id$. Since we use the combinatorial data to conclude 
our results,
it is important to know the unipotent part of $x$, when $x$ is determined by a 
combinatorial data $\Delta(x)$. For any element $x\in \G(q)$, we consider 
it inside ambient $\GL_n(q)$, and since the order of an element is unaltered under this consideration, we 
discuss the case for 
an element in $\GL_n$ only. The other cases will follow easily.
Recall from \cref{sec:conjcent}, for an element $x\in \GL_n(q)$, if the combinatorial data
is given by $\{(f,\lambda_f(x)):f\in\Phi\setminus\{z\}\}$, then the companion matrix corresponding to the combinatorial data is given by the sum of the blocks of the form
\begin{align*}
    \begin{pmatrix}
        C_f & \id & & \\
            & C_f & &\\
            &&\ddots &\\
            &&&C_f
    \end{pmatrix},
\end{align*}
where $C_f$ is the companion matrix corresponding to $f$, and $\id$ is a $\deg f\times \deg f$ matrix. Also, the size of this blocks are $\lambda_i\deg(f)\times \lambda_i\deg(f)$, where $\lambda=(\lambda_1,\lambda_2,\ldots)$.
It is well known that two matrices $x,x'\in\GL_n$ are conjugate to each other if
and only if they are conjugate in $\GL_n(\overline{\F_q})$, see \cite{BuGi2016primes}. Hence we consider the matrix $x$ in $\GL_n(\overline{\F_q})$, up to conjugacy. Then the combinatorial data $(f,\lambda_f(x))$ becomes
$\{(x-\mu_1,\lambda_f(x)),\ldots,(x-\mu_s,\lambda_f(x))\}$, where $f(x)=(x-\mu_1)\ldots(x-\mu_s)$.
Hence to summarize we have the following result, using \cref{prop:order-of-unipotent}.
\begin{lemma}
    Let $\Delta(x)=\{(f,\lambda_f(x)):f\in \Phi\}$ be the combinatorial data 
    corresponding to $x$ and $M=t\cdot p^r$, $p\nmid t$. If $x=x_sx_u$ is the Jordan decomposition of $x$, then $x$ is an 
    $M$-th root of identity in $\GL_n(q)$ if and only if $x_s$ is a $t$-th root of
    identity in $\GL_n(q)$ and each part of $\lambda(f)$ has size less than
    or equal to $p^r$.
\end{lemma}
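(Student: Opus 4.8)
The plan is to reduce the single equation $x^M=\id$ to two independent conditions, one on the semisimple part and one on the unipotent part of $x$, and then to translate the unipotent condition into the combinatorial statement about the parts of the partitions. The key structural input is the uniqueness of the Jordan decomposition: since $x_s$ and $x_u$ commute, $x^M=x_s^Mx_u^M$, where $x_s^M$ is semisimple and $x_u^M$ is unipotent. As the Jordan decomposition of $\id$ is $\id\cdot\id$, uniqueness forces $x^M=\id$ to be equivalent to the simultaneous conditions $x_s^M=\id$ and $x_u^M=\id$. This is precisely the reduction recorded at the start of \cref{sec:gen-fun}, and I would simply invoke it.

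Next I would separate the two coprime contributions coming from the factorization $M=t\,p^r$ with $p\nmid t$. For the semisimple part, $\ord(x_s)$ is coprime to $p$, so $\ord(x_s)\mid t\,p^r$ if and only if $\ord(x_s)\mid t$; thus $x_s^M=\id$ is equivalent to $x_s^t=\id$, that is, $x_s$ is a $t$-th root of identity. For the unipotent part, $\ord(x_u)$ is a power of $p$, say $p^s$, and $p^s\mid t\,p^r$ holds if and only if $s\le r$; hence $x_u^M=\id$ is equivalent to $x_u^{p^r}=\id$. This settles the semisimple half of the claim completely.

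It then remains to convert $x_u^{p^r}=\id$ into the assertion that every part of every $\lambda_f$ is at most $p^r$. Passing to $\GL_n(\overline{\fq})$ as in the paragraph preceding the statement, the combinatorial data $(f,\lambda_f)$ splits over each root $\mu$ of $f$, and on the generalized eigenspace attached to $\mu$ the element $x$ acts as $\mu$ times a unipotent matrix whose Jordan block sizes are exactly the parts of $\lambda_f$; consequently the Jordan block sizes of $x_u$ are precisely the parts appearing in the partitions $\lambda_f$ as $f$ ranges over $\Phi$. By \cref{prop:order-of-unipotent}, a single Jordan block of size $\ell$ has unipotent order $p^r$ precisely when $p^{r-1}<\ell\le p^r$, so $x_u^{p^r}=\id$ holds if and only if every such block has size at most $p^r$. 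Equivalently, writing $x_u=\id+N$ with $N$ nilpotent, the identity $(\id+N)^{p^r}=\id+N^{p^r}$, valid in characteristic $p$ since $\id$ and $N$ commute, shows that $x_u^{p^r}=\id$ amounts to $N^{p^r}=0$, which again says each block has size at most $p^r$. Combining this with the semisimple condition yields the lemma.

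I expect the main obstacle to be the bookkeeping in this last step: one must verify carefully that the parts of the partitions $\lambda_f$ really are the Jordan block sizes of the \emph{unipotent part} $x_u$, and not of $x$ itself. This rests on the block companion-matrix description recalled just before the statement, together with the fact that $x_u=x_s^{-1}x$ acts on each generalized eigenspace with the same block structure as $x$. Once this identification is in place, the remaining inferences are immediate applications of \cref{prop:order-of-unipotent}.
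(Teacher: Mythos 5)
Your proposal is correct and takes essentially the same route as the paper, which states this lemma as a summary of the preceding discussion: the reduction of $x^M=\id$ to $x_s^t=\id$ and $x_u^{p^r}=\id$ via the (uniqueness of the) Jordan decomposition and coprimality of $t$ and $p^r$, the passage to $\GL_n(\overline{\F_q})$ to identify the parts of the $\lambda_f$ with the Jordan block sizes of $x_u$, and an application of \cref{prop:order-of-unipotent}. Your freshman's-dream computation $(\id+N)^{p^r}=\id+N^{p^r}$ is the same binomial-theorem argument the paper uses inside the proof of \cref{prop:order-of-unipotent}, so it adds rigor but not a new method.
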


In this section, we use cycle indices for classical groups 
to conclude the final set of results. We will recall the cycle indices from \cite{Ful99CycInd} and \cite{Ful02randomMatrix} whenever needed 
and then deduce our results. We assume the polynomials to be in the indeterminate $u$. Let $x_{\varphi,\lambda}$ be variables corresponding to pairs of polynomials and
partitions. The cycle index for a group $\G(q)$ is defined to be
\begin{align*}
    1+\sum\limits_{n=1}^{\infty}\dfrac{z^n}{|\G_n(q)|}\left(\sum\limits_{\alpha\in \G_n(q)}
    \prod\limits_{\substack{\varphi\neq u}}x_{\varphi,\lambda_{\varphi}(\alpha)}\right).
\end{align*}
In the next couple of results, we are first going to recall the factorization of the
cycle indices, due to the works of Jason Fulman. After that, we are going to
substitute zero or one, on the basis of whether such an element contributes to the
quantity or not. We present the first lemma now.
\begin{lemma}\cite[pp. 55]{Ful02randomMatrix}\label{lem:cyc-ind-GL}
    For $G(q)=\GL_n(q)$, we have 
    \begin{align*}
        &1+\sum\limits_{n=1}^{\infty}\dfrac{z^n}{|\GL_n(q)|}\left(\sum\limits_{\alpha\in \GL_n(q)}
    \prod\limits_{\substack{\varphi\neq u}}x_{\varphi,\lambda_{\varphi}(\alpha)}\right)\\
    =&\prod\limits_{\varphi\neq u}\left(1+\sum\limits_{n\geq 1}\sum\limits_{\lambda\vdash n}
    x_{\varphi,\lambda}
    \dfrac{z^{n\cdot\deg\varphi}}{q^{\deg\varphi\cdot(\sum_{i}(\lambda_i')^2)}\prod\limits_{i\geq 1}\left(\dfrac{1}{q^{\deg\varphi}}\right)_{m_i(\lambda_{\varphi})}}\right).
    \end{align*}
\end{lemma}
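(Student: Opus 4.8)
The plan is to recognize the left-hand side as a weighted sum over conjugacy classes rather than over individual matrices, and then to exploit the fact that both the weight $\prod_{\varphi\neq u}x_{\varphi,\lambda_\varphi(\alpha)}$ and the centralizer order factor as products indexed by $\varphi\in\Phi\setminus\{u\}$. Since $\prod_{\varphi\neq u}x_{\varphi,\lambda_\varphi(\alpha)}$ depends only on the combinatorial data $\Delta(\alpha)$, hence only on the conjugacy class of $\alpha$, I would first regroup the inner sum by classes $C$ of $\GL_n(q)$:
\[
\frac{1}{|\GL_n(q)|}\sum_{\alpha\in\GL_n(q)}\prod_{\varphi\neq u}x_{\varphi,\lambda_\varphi(\alpha)}
=\sum_{C}\frac{|C|}{|\GL_n(q)|}\prod_{\varphi\neq u}x_{\varphi,(\lambda_C)_\varphi}.
\]
Writing $|C|=|\GL_n(q)|/|Z(\alpha_C)|$, the factor $|\GL_n(q)|$ cancels and each term acquires the weight $1/|Z(\alpha_C)|$.

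Next I would invoke the parameterization recalled in \cref{sec:conjcent}: a class of $\GL_n(q)$ is the same datum as a function $\varphi\mapsto\lambda_\varphi$ from $\Phi\setminus\{u\}$ to partitions with $\lambda_\varphi=\emptyset$ for all but finitely many $\varphi$ and with $\sum_{\varphi}\deg(\varphi)\,|\lambda_\varphi|=n$, and that the centralizer order factors as $|Z(\alpha_C)|=\prod_{\varphi}c_{\GL,\varphi,q}(\lambda_\varphi)$. Substituting this, writing $z^n=\prod_{\varphi}z^{\deg(\varphi)\,|\lambda_\varphi|}$, and summing over all $n$, the constraint $\sum_\varphi\deg(\varphi)\,|\lambda_\varphi|=n$ together with the outer sum over $n$ dissolves into an unconstrained sum over all such functions (the empty partition contributing the trivial factors $x_{\varphi,\emptyset}=1$, $z^0=1$, $c_{\GL,\varphi,q}(\emptyset)=1$):
\[
1+\sum_{n=1}^{\infty}z^n\sum_{C}\frac{1}{|Z(\alpha_C)|}\prod_{\varphi\neq u}x_{\varphi,(\lambda_C)_\varphi}
=\sum_{(\lambda_\varphi)_\varphi}\prod_{\varphi\neq u}\frac{x_{\varphi,\lambda_\varphi}\,z^{\deg(\varphi)\,|\lambda_\varphi|}}{c_{\GL,\varphi,q}(\lambda_\varphi)}.
\]

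The final step is the Euler-product-style factorization: a sum of products indexed over the set $\Phi\setminus\{u\}$ equals the product of the individual sums, so that
\[
\sum_{(\lambda_\varphi)_\varphi}\prod_{\varphi\neq u}\frac{x_{\varphi,\lambda_\varphi}\,z^{\deg(\varphi)\,|\lambda_\varphi|}}{c_{\GL,\varphi,q}(\lambda_\varphi)}
=\prod_{\varphi\neq u}\left(1+\sum_{n\geq1}\sum_{\lambda\vdash n}\frac{x_{\varphi,\lambda}\,z^{n\deg\varphi}}{c_{\GL,\varphi,q}(\lambda)}\right),
\]
and unfolding the definition of $c_{\GL,\varphi,q}$ recorded in \cref{sec:conjcent} gives precisely the asserted right-hand side. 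I expect the one genuine point to justify to be this last interchange: the index set $\Phi\setminus\{u\}$ is infinite, so the equality must be read as an identity of formal power series in $z$. The thing to check is that for each fixed $N$ only the finitely many $\varphi$ with $\deg\varphi\leq N$ can contribute a nonempty $\lambda_\varphi$ to the coefficient of $z^N$; hence each coefficient of the infinite product is a finite sum, the product is well defined coefficientwise, and the reordering into a product is legitimate term by term. Everything else is a direct translation through the class parameterization and the centralizer factorization already established.
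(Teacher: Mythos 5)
Your proof is correct, and it is essentially the standard derivation of the cycle index factorization from the cited source (Kung--Stong--Fulman): the paper itself states this lemma without proof, quoting \cite[pp.~55]{Ful02randomMatrix}, and your argument---regrouping over conjugacy classes via $|C|=|\GL_n(q)|/|Z(\alpha_C)|$, invoking the parameterization of classes by finitely supported partition-valued functions on $\Phi\setminus\{u\}$ with the factored centralizer order $c_{\GL,\varphi,q}(\lambda)$, and then factoring the sum into an Euler product justified coefficientwise in the formal power series ring---is precisely that argument, including the correct observation that only the finitely many $\varphi$ with $\deg\varphi\leq N$ contribute to the coefficient of $z^N$.
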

\begin{theorem}\label{th:GL-all-probability}
    Let $a_n$ denote the number of elements in $\GL_n(q)$ which are $M$-th root of identity. Let 
    $M=t\cdot p^r$, where $p\nmid t$. Then the generating function of the probability $a_n/|\GL_n(q)|$ is given by
    \begin{align*}
    &1+\sum\limits_{n=1}^{\infty}\dfrac{a_n}{|\GL_n(q)|}z^n=
        \prod\limits_{d|t}\left(1+\sum\limits_{m\geq 1}\sum\limits_{\substack{\lambda\vdash m\\\lambda_1\leq p^r}}
    \dfrac{z^{me(d)}}{q^{e(d)\cdot(\sum_{i}(\lambda_i')^2)}\prod\limits_{i\geq 1}\left(\dfrac{1}{q^{e(d)}}\right)_{m_i(\lambda_{\varphi})}}\right)^{\frac{\phi(d)}{e(d)}},
    \end{align*}
    where $e(d)$ denotes the multiplicative order of $q$ in $\Z/d\Z^\times$.
\end{theorem}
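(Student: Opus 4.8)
The plan is to specialize the cycle index of $\GL_n(q)$ from \cref{lem:cyc-ind-GL} by substituting $0$ or $1$ into the variables $x_{\varphi,\lambda}$ according to whether the pair $(\varphi,\lambda)$ is compatible with being an $M$-th root of identity. First I would invoke the Jordan decomposition lemma immediately preceding this statement: an element $\alpha\in\GL_n(q)$ with data $\{(f,\lambda_f(\alpha))\}$ satisfies $\alpha^M=\id$ if and only if its semisimple part is a $t$-th root of identity and every part of each $\lambda_f$ is at most $p^r$. The semisimple part is a $t$-th root of identity precisely when every polynomial $\varphi$ with $\lambda_\varphi$ nonempty divides $u^t-1$; equivalently, the squarefree minimal polynomial of $x_s$ divides $u^t-1$, so that every eigenvalue is a $t$-th root of unity.

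Next I would make the substitution $x_{\varphi,\lambda}=1$ when $\varphi\mid u^t-1$ and $\lambda_1\leq p^r$, and $x_{\varphi,\lambda}=0$ otherwise. With this choice the inner product $\prod_{\varphi\neq u}x_{\varphi,\lambda_\varphi(\alpha)}$ equals $1$ exactly when $\alpha$ is an $M$-th root of identity and $0$ otherwise, so the left-hand side of \cref{lem:cyc-ind-GL} becomes $1+\sum_{n\geq 1}(a_n/|\GL_n(q)|)z^n$. On the right-hand side, any factor indexed by a polynomial $\varphi\nmid u^t-1$ collapses to $1$, since only the empty-partition term survives. Note that $\varphi=u$ is automatically excluded because $u\nmid u^t-1$, consistent with $0$ never being an eigenvalue of an invertible matrix.

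It then remains to identify the surviving factors. By \cref{lem-cyclo-factor}, for each divisor $d\mid t$ the cyclotomic polynomial $Q_d$ splits over $\F_q$ into exactly $\phi(d)/e(d)$ distinct monic irreducible polynomials, all of degree $e(d)$, and $u^t-1=\prod_{d\mid t}Q_d$ accounts for every allowed $\varphi$. For each such $\varphi$ of degree $e(d)$, the corresponding factor from \cref{lem:cyc-ind-GL}, after imposing $\lambda_1\leq p^r$, reads
\begin{align*}
1+\sum_{m\geq 1}\sum_{\substack{\lambda\vdash m\\\lambda_1\leq p^r}}\dfrac{z^{me(d)}}{q^{e(d)\cdot(\sum_i(\lambda_i')^2)}\prod_{i\geq 1}\left(\dfrac{1}{q^{e(d)}}\right)_{m_i(\lambda)}}.
\end{align*}
Since all $\phi(d)/e(d)$ irreducible factors of $Q_d$ share the same degree $e(d)$, they contribute identical factors, and grouping them produces the exponent $\phi(d)/e(d)$; taking the product over $d\mid t$ then yields the asserted formula.

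I expect the only genuinely delicate point to be the translation of the order condition on the unipotent part into the partition constraint $\lambda_1\leq p^r$: this rests on \cref{prop:order-of-unipotent}, which pins the order of the Jordan block attached to $\lambda$ to the least power of $p$ bounding the largest part $\lambda_1$. Everything else is bookkeeping—matching the degree $e(d)$ and multiplicity $\phi(d)/e(d)$ to the cyclotomic factorization, and verifying that the $0/1$ specialization of the cycle index faithfully enumerates exactly the elements with $\alpha^M=\id$.
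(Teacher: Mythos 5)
Your proposal is correct and follows essentially the same route as the paper's own proof: both specialize the cycle index of \cref{lem:cyc-ind-GL} by setting $x_{\varphi,\lambda}=1$ exactly when $\varphi$ divides some $Q_d$ with $d\mid t$ and $\lambda_1\leq p^r$ (and $0$ otherwise), with the unipotent condition supplied by \cref{prop:order-of-unipotent} and the degrees and multiplicities of the surviving factors supplied by the cyclotomic factorization of \cref{lem-cyclo-factor}. If anything, your write-up is more explicit than the paper's, which compresses the final grouping of the $\phi(d)/e(d)$ identical degree-$e(d)$ factors into the remark that ``the occurrence of the degrees follows easily.''
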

\begin{proof}
It follows from the proof of \cref{lem:gen-sem-GL} and \cref{prop:order-of-unipotent} that, for an element $x\in \GL_n(q)$, 
\begin{enumerate}
    \item the semisimple part $x_s$ has order $t$ if and only if the irreducible 
    factors of the characteristic polynomial of $x_s$ divides $Q_d$ for some $d|t$,
    \item the unipotent part $x_u$ has order $p^r$ if and only if the partition 
    corresponding to $x_u$ has all parts lesser than or equal to $p^r$.
\end{enumerate}
Hence in the formula of \cref{lem:cyc-ind-GL}, we substitute $x_{\varphi,\lambda_{\varphi}(\alpha)}$ to be $1$ when
\begin{enumerate}
    \item all the polynomials occurring in $\Delta(\alpha)$ are divisors of $Q_d$,
    for some $d|t$,
    \item all the $\lambda_{\varphi}(\alpha)$ occurring in $\Delta(\alpha)$ has 
    highest part to be lesser than or equal to $p^r$.
\end{enumerate}
and put all other $x_{\varphi,\lambda_{\varphi}(\alpha)}$ to be zero. The occurrence of the degrees follows easily. This proves the equality among both sides.
\end{proof}
The theorems for the case of finite symplectic, orthogonal and unitary groups will
be stated without detailed proof, since the arguments will be as same as
\cref{th:GL-all-probability}, so we won't be repeating them. However, we will indicate the results from which the proofs follow. 
\begin{lemma}\cite{Ful99CycInd}\label{lem:cyc-ind-sp}
    For $G(q)=\Sp_{2n}(q)$, we have
    \begin{align*}
        &1+\sum\limits_{n=1}^{\infty}\dfrac{z^{2n}}{|\Sp_{2n}(q)|}\left(\sum\limits_{\alpha\in\Sp_{2n}(q)}
        \prod\limits_{\varphi=u\pm 1}x_{\varphi, \lambda^{\pm}_\varphi(\alpha)}
        \prod\limits_{\varphi\neq u\pm 1} x_{\varphi,\lambda_\varphi(\alpha)}
        \right)\\
        =&
        \prod\limits_{\varphi=u\pm 1}\left(1+\sum\limits_{n\geq 1}\sum\limits_{\lambda^{\pm}\vdash n} x_{\varphi,\lambda^{\pm}}\dfrac{z^{n}}{c_{\Sp,u\pm1,q}(\lambda^{\pm})}\right)
       \prod\limits_{\substack{\varphi=\varphi^*\\\varphi\neq u\pm 1}}\left(1+\sum\limits_{n\geq 1}\sum\limits_{\lambda\vdash n} x_{\varphi,\lambda}\dfrac{(-z^{\deg\varphi})^{n}}{c_{\GL,u-1,-(q^{\deg\varphi})^{1/2}}(\lambda)}\right)\\
       \times & \prod\limits_{\substack{\{\varphi,\varphi^*\}\\\varphi\neq \varphi^*}}\left(1+\sum\limits_{n\geq 1}\sum\limits_{\lambda\vdash n} x_{\varphi,\lambda}x_{\varphi^*,\lambda}\dfrac{z^{2n\deg\varphi}}{c_{\GL,u-1,q^{\deg\varphi}}(\lambda)}\right)
    \end{align*}
\end{lemma}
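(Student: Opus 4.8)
The plan is to run the standard cycle-index argument, reducing everything to Wall's parameterization of symplectic conjugacy classes together with his centralizer formula recorded in \cref{lem:centralizer-sizes-symp}. First I would rewrite the inner sum as a sum over conjugacy classes. Since the assignment $\alpha \mapsto \prod_{\varphi} x_{\varphi, \lambda_\varphi(\alpha)}$ is a class function (the datum $\Delta(\alpha)$ is a conjugacy invariant), the class equation $|c| = |\Sp_{2n}(q)|/|Z(c)|$ gives
\[
\frac{1}{|\Sp_{2n}(q)|}\sum_{\alpha \in \Sp_{2n}(q)} \prod_{\varphi} x_{\varphi, \lambda_\varphi(\alpha)} = \sum_{c} \frac{1}{|Z(c)|}\prod_{\varphi} x_{\varphi, \lambda_\varphi(c)},
\]
where $c$ runs over conjugacy classes and $Z(c)$ is a centralizer. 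By the parameterization recalled in \cref{sec:conjcent}, such classes correspond bijectively to functions $\lambda:\Phi\to\pa\cup\ssp_{\Sp}$ satisfying (a)--(d), and by \cref{lem:centralizer-sizes-symp} we have $|Z(c)| = \prod_{\varphi} B(\varphi)$. Thus the left-hand side equals $1+\sum_n z^{2n}\sum_\lambda \prod_\varphi x_{\varphi,\lambda_\varphi}/B(\varphi)$, the inner sum ranging over all valid data of total dimension $2n$.

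Second, I would observe that this expression is an Euler product over the orbits of the duality involution $\varphi\mapsto\varphi^*$ on $\Phi$. The key point is that all three ingredients factor orbit-by-orbit: conditions (b) and (c) constrain the datum on each $*$-orbit independently, the dimension $\sum_\varphi|\lambda_\varphi|\deg\varphi$ is additive over orbits, and \cref{lem:centralizer-sizes-symp} already presents $\prod_\varphi B(\varphi)$ as a product over orbits. Hence the generating function factors into an infinite product of independent local contributions, one per orbit, and there are exactly three orbit types: the fixed linear polynomials $u-1$ and $u+1$; the self-dual polynomials $\varphi=\varphi^*\neq u\pm 1$ (the SRIM polynomials, of even degree); and the genuine pairs $\{\varphi,\varphi^*\}$ with $\varphi\neq\varphi^*$.

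Finally I would match each local factor with the corresponding factor on the right. For $\varphi=u\pm 1$ condition (c) forces a symplectic signed partition $\lambda^\pm$, the local weight is $z^{|\lambda^\pm|}/c_{\Sp,u\pm 1,q}(\lambda^\pm)$, and summing over all such data yields the first product. For a pair $\{\varphi,\varphi^*\}$, condition (b) forces $\lambda_{\varphi^*}=\lambda_\varphi=:\lambda$, the two square-root factors $|\GL_{m_\mu}(Q)|^{1/2}$ of \cref{lem:centralizer-sizes-symp} combine into $c_{\GL,u-1,q^{\deg\varphi}}(\lambda)$, and each of $\varphi,\varphi^*$ contributes dimension $|\lambda|\deg\varphi$ (hence $z^{2|\lambda|\deg\varphi}$) and the variable $x_{\varphi,\lambda}$, resp.\ $x_{\varphi^*,\lambda}$; this is the third product. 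The self-dual SRIM case is where the real work lies: here \cref{lem:centralizer-sizes-symp} gives a unitary centralizer $|\U_{m_\mu}(Q)|$, which I would rewrite in $\GL$-form via Ennola duality, i.e.\ the identity expressing the order of $\U_m(Q)$ as the $\GL_m$ order-polynomial evaluated at a negative argument. This is precisely what produces the substitution $q\mapsto -(q^{\deg\varphi})^{1/2}$ in $c_{\GL,u-1,-(q^{\deg\varphi})^{1/2}}(\lambda)$ and the sign in $(-z^{\deg\varphi})^{|\lambda|}$. The main obstacle is the bookkeeping at this step: one must verify that Wall's $|\phi|$ for a self-dual polynomial is its \emph{unitary} degree $\deg\varphi/2$, reconcile the field size $q^{\deg\varphi/2}$ with $(q^{\deg\varphi})^{1/2}$, and check that the sign $(-1)^{|\lambda|}$ coming from Ennola duality matches $(-z^{\deg\varphi})^{|\lambda|}$ for \emph{every} partition $\lambda$ and not merely at the level of orders. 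Once these three local identifications are in place, multiplying them over all orbits reproduces the right-hand side verbatim.
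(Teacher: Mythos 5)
Your proposal is correct and takes essentially the same route as the paper's source for this statement (the paper itself gives no proof, quoting the lemma from \cite{Ful99CycInd}): rewrite the cycle index as a sum over Wall's class data weighted by reciprocal centralizer orders via \cref{lem:centralizer-sizes-symp}, factor it as an Euler product over $*$-orbits, and handle the self-dual factors by Ennola duality, where the per-partition sign check you flag goes through because $\sum_i(\lambda_i')^2\equiv|\lambda|\pmod 2$, so $(-1)^{|\lambda|}c_{\GL,u-1,-(q^{\deg\varphi})^{1/2}}(\lambda)$ is exactly the positive unitary centralizer quantity. The bookkeeping obstacles you identify (the field size $(q^{\deg\varphi})^{1/2}$ for SRIM polynomials and the merging of the two $|\GL_{m_\mu}(Q)|^{1/2}$ factors via $\sum_i(\lambda_i')^2=\sum_\mu\mu m_\mu^2+2\sum_{\mu<\nu}\mu m_\mu m_\nu$) all resolve as you anticipate, so nothing essential is missing.
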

\begin{theorem}\label{th:Sp-all-probability}
    Let $a_n$ denote the number of elements in $\Sp_{2n}(q)$ which are $M$-th root of identity. Let 
    $M=t\cdot p^r$, where $p\nmid t$. Then the generating function of the probability $a_n/|\Sp_{2n}(q)|$ is given by
    \begin{align*}
        &1+\sum\limits_{n=1}^{\infty} \dfrac{a_n}{|\Sp_{2n}(q)|}z^n\\
        =&\left(1+\sum\limits_{n\geq 1}\sum\limits_{\substack{\lambda^{\pm}\vdash n\\\lambda^\pm_1\leq p^r}} \dfrac{z^{n}}{c_{\Sp,u\pm1,q}(\lambda^{\pm})}\right)^{o(t)}
        \prod\limits_{m=1}^{\infty}\left(
        \prod\limits_{\substack{d|t\\d\in D_m}}\left[1+\sum\limits_{n\geq 1} \sum\limits_{\substack{\lambda}}\dfrac{(-z)^ne(d)}{c_{\GL,u-1,-(q^{e(d)})^{1/2}}(\lambda)}\right]^{\frac{\phi(d)}{e(d)}}\right.\\
        &\left.
        \prod\limits_{\substack{d|t\\d\not\in D_m, d\neq 1, 2}}\left[1+\sum\limits_{n\geq 1}\sum\limits_{\lambda\vdash n}\dfrac{z^{2ne(d)}}{c_{\GL,u-1,{q^{e(d)}}}(\lambda)}\right]^{\frac{\phi(d)}{2e(d)}}
        \right)
    \end{align*}
\end{theorem}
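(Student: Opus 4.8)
The plan is to run the argument of \cref{th:GL-all-probability} verbatim in spirit, with the symplectic cycle index of \cref{lem:cyc-ind-sp} in place of the $\GL$ one. Writing $M=t\cdot p^{r}$ with $p\nmid t$ and decomposing $x=x_{s}x_{u}$, the equation $x^{M}=\id$ is equivalent to the pair of conditions $x_{s}^{t}=\id$ and $x_{u}^{p^{r}}=\id$. By \cref{prop:order-of-unipotent} the latter amounts to requiring that every part of every partition attached to $x$ be at most $p^{r}$, while the former, exactly as in \cref{lem:gen-sem-Sp}, says that each irreducible factor $\varphi$ appearing in the combinatorial data of $x$ divides $Q_{d}$ for some $d\mid t$. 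I would therefore specialise the variables of \cref{lem:cyc-ind-sp} by setting $x_{\varphi,\lambda}=1$ exactly when $\varphi\mid Q_{d}$ for some $d\mid t$ and $\lambda_{1}\le p^{r}$, and $x_{\varphi,\lambda}=0$ otherwise, and then read off the surviving factors.

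The substance of the proof is to match the three products of \cref{lem:cyc-ind-sp} against the three families of factors of the $Q_{d}$. The linear self-dual polynomials $u-1$ and $u+1$ carry the eigenvalues $\pm1$; since $1$ is a $t$-th root of unity unconditionally but $-1$ is one precisely when $t$ is even, exactly $o(t)$ of these two factors survive, each contributing the same sum over symplectic signed partitions with $\lambda^{\pm}_{1}\le p^{r}$, which produces the first factor raised to the power $o(t)$. For the remaining $d\mid t$ I would invoke \cref{lem:star-factor}: the factors of $Q_{d}$ are all SRIM when $d\in D_{m}$ for some $m$ and split into conjugate pairs $\{\varphi,\varphi^{*}\}$ otherwise. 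In the first case the $\phi(d)/e(d)$ SRIM factors of degree $e(d)$ each contribute the unitary-type factor with centraliser $c_{\GL,u-1,-(q^{e(d)})^{1/2}}(\lambda)$, yielding the middle product with exponent $\phi(d)/e(d)$; in the second case the $\phi(d)/e(d)$ factors assemble into $\phi(d)/(2e(d))$ pairs, each contributing the $\GL$-type factor with centraliser $c_{\GL,u-1,q^{e(d)}}(\lambda)$ in powers of $z^{2e(d)}$, giving the last product with exponent $\phi(d)/(2e(d))$. The excluded values $d=1,2$ are precisely $u-1$ and $u+1$, already accounted for in the first factor, so they must be omitted from the paired product.

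The delicate point is the bookkeeping that places each $d\mid t$ in exactly one family and attaches the right exponent, variable degree and field extension: one uses \cref{lem-cyclo-factor} for the count $\phi(d)/e(d)$ and the common degree $e(d)$, \cref{lem:star-factor} (hence the sets $D_{m}$) for the SRIM-versus-paired dichotomy, and must ensure the degenerate $d\in\{1,2\}$ are counted once only. The single genuinely new verification, beyond what \cref{th:GL-all-probability} already supplies, is that specialising the self-dual block of \cref{lem:cyc-ind-sp} reproduces the unitary-type centraliser $c_{\GL,u-1,-(q^{e(d)})^{1/2}}$ --- equivalently the unitary sizes already seen in the semisimple count of \cref{cr:semsp-Sp-elements}. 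Once this is in hand the computation is the same substitution as in the $\GL$ case and introduces no further ideas, which is why the result can be stated without repeating the details.
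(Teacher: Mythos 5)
Your proposal is correct and is essentially the paper's own argument: the paper's proof is exactly the citation of \cref{cr:semsp-Sp-elements} and \cref{prop:order-of-unipotent} followed by the specialisation of the cycle index in \cref{lem:cyc-ind-sp}, with the substitution $x_{\varphi,\lambda}\in\{0,1\}$ performed as in \cref{th:GL-all-probability} and $o(t)$ justified as in \cref{cr:semsp-Sp-elements}. Your write-up simply makes explicit the bookkeeping (the $u\pm 1$ factors versus $o(t)$, the $D_m$ dichotomy from \cref{lem:star-factor}, the exponents $\phi(d)/e(d)$ and $\phi(d)/(2e(d))$, and the exclusion of $d=1,2$) that the paper leaves to the reader.
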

\begin{proof}
    This follows from the proof of \cref{cr:semsp-Sp-elements} and \cref{prop:order-of-unipotent}. The occurrence of $o(t)$ can be justified 
    as was done in \cref{cr:semsp-Sp-elements}. Finally, we need to use \cref{lem:cyc-ind-sp}.
\end{proof}
\begin{lemma}\cite{Ful99CycInd}\label{lem:cyc-ind-O}
    Define the cycle index for sum of the both type of orthogonal groups to be
    \begin{align*}
        1+\sum_{n=1}^{\infty}&\left(\dfrac{z^n}{|\On{+}_n(q)|}\sum\limits_{\alpha\in\On{+}(q)}\prod\limits_{\varphi=u\pm 1} x_{\varphi,\lambda_\varphi^{\pm}(\alpha)}\prod\limits_{\varphi\neq u,u\pm 1}x_{\varphi,\lambda_\varphi(\alpha)}\right.\\
        +&\left.\dfrac{z^n}{|\On{-}_n(q)|}\sum\limits_{\alpha\in\On{-}(q)}\prod\limits_{\varphi=u\pm 1} x_{\varphi,\lambda_\varphi^{\pm}(\alpha)}\prod\limits_{\varphi\neq u,u\pm 1}x_{\varphi,\lambda_\varphi(\alpha)}\right).
    \end{align*}
    This quantity factorises as
    \begin{align*}
        &\prod\limits_{\varphi=u\pm 1}\left(1+\sum\limits_{n\geq 1}\sum\limits_{\lambda^{\pm}\vdash n} x_{\varphi,\lambda^{\pm}}\dfrac{z^{n}}{c_{\On{},u\pm1,q}(\lambda^{\pm})}\right)
       \prod\limits_{\substack{\varphi=\varphi^*\\\varphi\neq u\pm 1}}\left(1+\sum\limits_{n\geq 1}\sum\limits_{\lambda\vdash n} x_{\varphi,\lambda}\dfrac{(-z^{\deg\varphi})^{n}}{c_{\GL,u-1,-(q^{\deg\varphi})^{1/2}}(\lambda)}\right)\\
       \times & \prod\limits_{\substack{\{\varphi,\varphi^*\}\\\varphi\neq \varphi^*}}\left(1+\sum\limits_{n\geq 1}\sum\limits_{\lambda\vdash n} x_{\varphi,\lambda}x_{\varphi^*,\lambda}\dfrac{z^{2n\deg\varphi}}{c_{\GL,u-1,q^{\deg\varphi}}(\lambda)}\right)
    \end{align*}
\end{lemma}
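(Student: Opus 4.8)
The plan is to read off the factorization directly from Wall's parameterization of conjugacy classes recalled in \cref{sec:conjcent}, together with the multiplicativity of the centralizer order in \cref{centra-size-on}. First I would rewrite each of the two inner sums by grouping elements into conjugacy classes: since the monomial $\prod_{\varphi\neq u}x_{\varphi,\lambda_\varphi(\alpha)}$ is constant on classes, the orbit-stabilizer relation gives
\begin{align*}
\frac{1}{|\On{\epsilon}_n(q)|}\sum_{\alpha\in\On{\epsilon}_n(q)}\prod_{\varphi\neq u}x_{\varphi,\lambda_\varphi(\alpha)}=\sum_{\lambda}\frac{1}{\prod_\varphi B(\varphi)}\prod_{\varphi\neq u}x_{\varphi,\lambda_\varphi},
\end{align*}
where $\lambda$ ranges over the data parameterizing classes of $\On{\epsilon}_n(q)$ and $B(\varphi)$ is the local centralizer factor of \cref{centra-size-on}. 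Adding the $\epsilon=+$ and $\epsilon=-$ contributions and summing over $n$ with weight $z^n$, the constraint $\sum_\varphi|\lambda_\varphi|\deg\varphi=n$ is absorbed into the exponent of $z$.

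The second step is to observe that both the admissibility conditions on $\lambda$ (namely $\lambda_{\varphi^*}=\lambda_\varphi$, with signed partitions exactly at $\varphi=u\pm1$) and the product $\prod_\varphi B(\varphi)$ are entirely local: the data attached to distinct polynomials, or to distinct dual pairs $\{\varphi,\varphi^*\}$, are chosen independently, so the generating function factors as an infinite product indexed by these units. It then remains only to identify each factor. For a dual pair $\varphi\neq\varphi^*$ the duality constraint forces a common partition $\lambda$, the combined block has $\F_q$-dimension $2n\deg\varphi$, and Wall's value $A(\varphi^\mu)=|\GL_{m_\mu}(Q)|^{1/2}$ reassembles over the pair into a full $\GL$ centralizer, producing the factor with denominator $c_{\GL,u-1,q^{\deg\varphi}}(\lambda)$ and weight $z^{2n\deg\varphi}$. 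For a self-dual $\varphi=\varphi^*$ with $\varphi\neq u\pm1$ (a SRIM polynomial) the centralizer is of unitary type $|\U_{m_\mu}(Q)|$ with $Q=q^{\deg\varphi}$; invoking Ennola duality, i.e. the substitution $q\mapsto-(q^{\deg\varphi})^{1/2}$ that converts $\GL$ orders into $\U$ orders, rewrites this factor as $c_{\GL,u-1,-(q^{\deg\varphi})^{1/2}}(\lambda)$ and simultaneously accounts for the sign $(-z^{\deg\varphi})^n$.

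The delicate factor, and the reason the two orthogonal cycle indices must be summed rather than treated separately, is the eigenvalue-$\pm1$ part $\varphi=u\pm1$. Here \cref{centra-size-on} yields alternating contributions $|\On{\epsilon'}_{m_\mu}(q)|$ for $\mu$ odd and $q^{-m_\mu/2}|\Sp_{m_\mu}(q)|$ for $\mu$ even, with $\epsilon'$ dictated by the sign decorating that part; these are precisely the data packaged into $c_{\On{},u\pm1,q}(\lambda^\pm)$. The subtlety is that the global type satisfies $\tau(V)=\prod_i\tau(V_i)$, so a fixed signed partition does not sit inside a single $\On{\pm}_n(q)$ but is apportioned between the two according to its total type. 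The main obstacle is therefore to verify that summing the $\epsilon=+$ and $\epsilon=-$ cycle indices corresponds exactly to summing freely over all sign assignments on the parts at $\varphi=u\pm1$; once this is checked the two global types coalesce and the $\varphi=u\pm1$ factor becomes the clean signed-partition generating function $\bigl(1+\sum_{n\geq1}\sum_{\lambda^\pm\vdash n}x_{\varphi,\lambda^\pm}\,z^n/c_{\On{},u\pm1,q}(\lambda^\pm)\bigr)$. I expect every remaining manipulation to be the routine bookkeeping already performed for $\GL_n(q)$ in \cref{lem:cyc-ind-GL}.
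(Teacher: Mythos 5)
The paper offers no proof of this lemma---it is imported verbatim from Fulman \cite{Ful99CycInd}---and your proposal is a correct reconstruction of the standard argument behind it: orbit--stabilizer reduction to Wall's class data, locality of the centralizer factors from \cref{centra-size-on}, the Ennola-type substitution $q\mapsto-(q^{\deg\varphi})^{1/2}$ for the self-dual factors, and, crucially, the observation that adding the $+$ and $-$ type cycle indices removes the global constraint coming from $\tau(V)=\prod_i\tau(V_i)$, so the signs on the parts at $\varphi=u\pm1$ may be chosen freely and the sum factorizes. Nothing essential is missing.
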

\begin{theorem}\label{th:Or-all-probability}
    Let $a^{\epsilon}_n$ denote the number of elements in $\On{\epsilon}_{n}(q)$ which are $M$-th root of identity, where $\epsilon\in\{\pm\}$. Let 
    $M=t\cdot p^r$, where $p\nmid t$. Then the generating function of the sum of the probabilities $a^\epsilon_n/|\On{\epsilon}_{n}(q)|$ is given by
    \begin{align*}
        &1+\sum\limits_{n=1}^{\infty} \left(\dfrac{a_n^+}{|\On{+}_{n}(q)|}+\dfrac{a_n^-}{|\On{-}_{n}(q)|}\right)z^n\\
        =&\left(1+\sum\limits_{n\geq 1}\sum\limits_{\substack{\lambda^{\pm}\vdash n\\\lambda^\pm_1\leq p^r}} \dfrac{z^{n}}{c_{\On{},u\pm1,q}(\lambda^{\pm})}\right)^{o(t)}
        \prod\limits_{m=1}^{\infty}\left(
        \prod\limits_{\substack{d|t\\d\in D_m}}\left[1+\sum\limits_{n\geq 1} \sum\limits_{\substack{\lambda}}\dfrac{(-z)^ne(d)}{c_{\GL,u-1,-(q^{e(d)})^{1/2}}(\lambda)}\right]^{\frac{\phi(d)}{e(d)}}\right.\\
        &\left.
        \prod\limits_{\substack{d|t\\d\not\in D_m, d\neq 1, 2}}\left[1+\sum\limits_{n\geq 1}\sum\limits_{\lambda\vdash n}\dfrac{z^{2ne(d)}}{c_{\GL,u-1,{q^{e(d)}}}(\lambda)}\right]^{\frac{\phi(d)}{2e(d)}}
        \right)
    \end{align*}
\end{theorem}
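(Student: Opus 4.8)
The plan is to mimic the proof of \cref{th:GL-all-probability} almost verbatim, using the combined orthogonal cycle index of \cref{lem:cyc-ind-O} as the starting point and turning it into a counting generating function by the indicator-substitution trick. First I would record the combinatorial criterion for an orthogonal element to be an $M$-th root of identity: writing $M = t\cdot p^r$ with $p\nmid t$ and invoking the Jordan decomposition $x=x_sx_u$, an element satisfies $x^M=\id$ exactly when $x_s^t=\id$ and $x_u^{p^r}=\id$. By the semisimple analysis underlying \cref{cr:semsp-Or-elements} the first condition forces every irreducible polynomial $\varphi$ occurring in $\Delta(x)$ to divide $Q_d$ for some $d\mid t$, and by \cref{prop:order-of-unipotent} the second condition forces every partition $\lambda_\varphi(x)$ to have largest part at most $p^r$.

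Next I would perform the substitution in \cref{lem:cyc-ind-O}: set $x_{\varphi,\lambda}=1$ whenever $(\varphi,\lambda)$ satisfies both the divisibility condition ($\varphi\mid Q_d$ for some $d\mid t$) and the size condition ($\lambda_1\leq p^r$), and set $x_{\varphi,\lambda}=0$ otherwise. With this choice the left-hand side of the cycle index becomes precisely $1+\sum_n\bigl(a_n^+/|\On{+}_n(q)|+a_n^-/|\On{-}_n(q)|\bigr)z^n$, since the summand attached to $\alpha$ survives (evaluates to $1$) if and only if $\alpha$ is an $M$-th root of identity; the right-hand side then collapses factor by factor exactly as in the symplectic computation of \cref{th:Sp-all-probability}.

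I would treat the three kinds of factors as follows. For $\varphi=u\pm 1$ the surviving signed partitions are those with $\lambda_1^\pm\leq p^r$; the eigenvalue $+1$ always yields a $t$-th root while $-1$ does so only when $2\mid t$ (equivalently $2\mid M$ in odd characteristic), which produces the exponent $o(t)$ on the $c_{\On{},u\pm1,q}$ factor. For the self-dual irreducibles $\varphi=\varphi^*\neq u\pm1$, \cref{lem:star-factor} identifies these with the SRIM factors of $Q_d$ for $d\in D_m$; each such $Q_d$ contributes $\phi(d)/e(d)$ of them, each carrying $\deg\varphi=e(d)$ into the $z$-exponent and into $c_{\GL,u-1,-(q^{e(d)})^{1/2}}$. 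For the conjugate pairs $\varphi\neq\varphi^*$, which come from $Q_d$ with $d\notin D_m$ and $d\neq 1,2$, one groups $\varphi$ with $\varphi^*$, giving $\phi(d)/(2e(d))$ pairs and the exponent $2e(d)$ on $z$ inside $c_{\GL,u-1,q^{e(d)}}$. Imposing the bound $\lambda_1\leq p^r$ throughout yields the displayed product.

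The one point that genuinely requires care, and the main conceptual obstacle, is that the left-hand side is the \emph{sum} of the two type-probabilities rather than a single probability. The type $\epsilon=\pm$ of the ambient orthogonal space is invisible in the self-dual and conjugate-pair factors, whose centralizers are unitary or general linear groups independent of $\epsilon$, and it enters only through the orthogonal signed partitions at $\varphi=u\pm1$. This is exactly why \cref{lem:cyc-ind-O} is formulated as a combined cycle index for $\On{+}$ and $\On{-}$ together: summing over both types is what makes the $\epsilon$-dependence collapse into the single quantity $c_{\On{},u\pm1,q}$, and it is the reason the statement records the sum $a_n^+/|\On{+}_n(q)|+a_n^-/|\On{-}_n(q)|$ rather than each type separately. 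Beyond verifying this collapse, the argument is routine bookkeeping of degrees and multiplicities identical to \cref{th:Sp-all-probability}.
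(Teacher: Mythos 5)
Your proposal is correct and takes essentially the same route as the paper: the paper's proof likewise combines the semisimple analysis of \cref{cr:semsp-Or-elements} with the unipotent bound of \cref{prop:order-of-unipotent} and then performs the indicator substitution in the combined cycle index of \cref{lem:cyc-ind-O}, justifying the exponent $o(t)$ as in the semisimple corollary. Your explanation of why the two type-probabilities must be summed mirrors the paper's own remark that the clubbing is forced by how the cycle indices for the two types of orthogonal groups are treated.
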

\begin{proof}
    The reason for clubbing these two probabilities is related to 
    how the cycle indices for the different types of orthogonal groups are treated.
    This follows from the proof of \cref{cr:semsp-Or-elements} and \cref{prop:order-of-unipotent}. The occurrence of $o(M)$ can be justified 
    as was done in \cref{cr:semsp-Or-elements}. Finally, we need to use \cref{lem:cyc-ind-O}.
\end{proof}
A formula for the generating function for the difference of the probabilities
$\dfrac{a^+_n}{|\On{+}_n(q)|}-\dfrac{a^-_n}{|\On{-}(q)|}$ can be formulated easily, and using the same techniques we can obtain a formula for the generating function. This is omitted here. But, a treatment for a special case can be found in \cite{FuNePr05}. 
\begin{lemma}\cite[Theorem 10, pp 63.]{Ful99CycInd, Ful02randomMatrix}\label{lem:cyc-ind-Un}
     For $G(q)=\U_n(q^2)$, we have 
    \begin{align*}
        &1+\sum\limits_{n=1}^{\infty}\dfrac{z^n}{|\U_n(q)|}\left(\sum\limits_{\alpha\in \U_n(q^2)}
    \prod\limits_{\substack{\varphi\neq u}}x_{\varphi,\lambda_{\varphi}(\alpha)}\right)\\
    =&\prod\limits_{\substack{\varphi\neq u\\\varphi=\widetilde{\varphi}}}\left[1+\sum\limits_{n\geq 1}\sum\limits_{\lambda\vdash n}
    x_{\varphi,\lambda}
    \dfrac{(-z)^{n\cdot\deg\varphi}}{(-q)^{\deg\varphi\cdot(\sum_{i}(\lambda_i')^2)}\prod\limits_{i\geq 1}\left(\dfrac{1}{(-q)^{\deg\varphi}}\right)_{m_i(\lambda_{\varphi})}}\right]
    \\
    \times
    &
    \prod\limits_{\substack{\varphi\neq \widetilde{\varphi}\\\{\varphi, \widetilde{\varphi}\}}}\left[1+\sum\limits_{n\geq 1}\sum\limits_{\lambda\vdash n}
    x_{\varphi,\lambda} x_{\widetilde{\varphi},\lambda}
    \dfrac{z^{2n\cdot\deg\varphi}}{q^{2\deg\varphi\cdot(\sum_{i}(\lambda_i')^2)}\prod\limits_{i\geq 1}\left(\dfrac{1}{q^{2\deg\varphi}}\right)_{m_i(\lambda_{\varphi})}}\right]
    \end{align*}
\end{lemma}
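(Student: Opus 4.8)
The plan is to follow the standard recipe for cycle-index factorizations of classical groups, as in the works of Fulman: replace the sum over group elements by a sum over conjugacy classes, use the class equation to rewrite each class size as $|\U_n(q^2)|$ divided by a centralizer order, and then exploit the multiplicativity of both the centralizer order and the dimension count to convert the whole generating series into an Euler product indexed by $\sim$-irreducible polynomials. First I would recall from \cref{sec:conjcent} that a conjugacy class of $\U_n(q^2)$ is encoded by a function $\lambda\colon\widetilde{\Phi}\to\mathcal P$ satisfying $\lambda(\varphi)=\lambda(\widetilde{\varphi})$ and $\sum_{\varphi}|\lambda(\varphi)|\deg\varphi=n$. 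Grouping the inner sum $\sum_{\alpha}\prod_{\varphi}x_{\varphi,\lambda_\varphi(\alpha)}$ by conjugacy class and writing the class size as $|\U_n(q^2)|/|C(\alpha)|$, the factor $|\U_n(q^2)|$ cancels the $1/|\U_n(q^2)|$ in front; after summing over $n$ (the constant $1$ being the empty datum $n=0$) the left-hand side becomes
\[
\sum_{\lambda}\frac{z^{\sum_{\varphi}|\lambda(\varphi)|\deg\varphi}}{|C(\lambda)|}\prod_{\varphi}x_{\varphi,\lambda(\varphi)},
\]
where the sum ranges over all admissible data and $|C(\lambda)|$ is the centralizer order.

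The key structural input is that the centralizer order factorizes over the $\sim$-irreducible polynomials, grouped into self-dual ones and unordered dual pairs $\{\varphi,\widetilde{\varphi}\}$:
\[
|C(\lambda)|=\prod_{\varphi=\widetilde{\varphi}}c_{\U,\varphi}(\lambda(\varphi))\cdot\prod_{\{\varphi,\widetilde{\varphi}\}}c_{\GL,\varphi}(\lambda(\varphi)).
\]
This is Wall's description \cite{wall63}: the centralizer is an internal direct product of classical groups, a unitary factor over $\F_{q^{2\deg\varphi}}$ for each self-dual $\varphi$ and a general-linear factor over the same extension for each dual pair. Since $n=\sum_{\varphi}|\lambda(\varphi)|\deg\varphi$ is additive over the classes of polynomials and the constraint $\lambda(\varphi)=\lambda(\widetilde{\varphi})$ locks each pair together, the series factors as an Euler product: one factor
\[
1+\sum_{\lambda\neq\emptyset}\frac{x_{\varphi,\lambda}\,z^{|\lambda|\deg\varphi}}{c_{\U,\varphi}(\lambda)}
\]
for each self-dual $\varphi$, and one factor
\[
1+\sum_{\lambda}\frac{x_{\varphi,\lambda}\,x_{\widetilde{\varphi},\lambda}\,z^{2|\lambda|\deg\varphi}}{c_{\GL,\varphi}(\lambda)}
\]
for each dual pair, the doubled exponent $2|\lambda|\deg\varphi$ reflecting that $\varphi$ and $\widetilde{\varphi}$ jointly contribute $2\deg\varphi\cdot|\lambda|$ to $n$. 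This already matches the shape of the two products in the statement.

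Finally I would make the two centralizer factors explicit. For a dual pair the factor is a general linear group over $\F_{q^{2\deg\varphi}}$, so $c_{\GL,\varphi}(\lambda)=q^{2\deg\varphi\cdot\sum_i(\lambda_i')^2}\prod_{i\geq 1}(1/q^{2\deg\varphi})_{m_i(\lambda)}$, which is exactly the second product. For a self-dual $\varphi$ — necessarily of odd degree, as recorded before \cref{lem:tilde-factor} — the factor is a unitary group over the same extension, and here \emph{Ennola duality} enters: the order of $\U_m(Q)$ is obtained from that of $\GL_m$ by the formal substitution $q\mapsto -q$, up to a global sign. Tracking this substitution through the $\GL$ centralizer $c_{\GL,\varphi,q}(\lambda)$ recalled in \cref{sec:conjcent} turns $q^{\deg\varphi}$ into $(-q)^{\deg\varphi}$ in the denominator, while the residual sign is absorbed by replacing $z^{\deg\varphi}$ with $(-z)^{\deg\varphi}$, producing precisely the $(-q)$ and $(-z)$ of the first product.

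The hard part will be pinning down the signs in the self-dual case: verifying that the powers of $-1$ emitted by Ennola duality of the unitary centralizer order combine exactly into the $(-z)^{n\deg\varphi}$ and $(-q)^{\deg\varphi\cdot\sum_i(\lambda_i')^2}$ demanded by the statement, with no stray sign left over. I would settle this by comparing the unitary centralizer formula of \cite{wall63} directly against $c_{\GL,\varphi,q}(\lambda)$ under $q\mapsto -q^{\deg\varphi}$ and matching both the power of $q$ and the parity of the exponent of $z$ for small partitions $\lambda$, after which the general case follows by the multiplicativity already established. As this factorization is due to Fulman, the argument above is essentially a transcription of \cite{Ful99CycInd,Ful02randomMatrix} into the present notation.
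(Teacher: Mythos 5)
The paper offers no proof of this lemma at all---it is quoted verbatim from Fulman \cite{Ful99CycInd,Ful02randomMatrix}---and your sketch is precisely the derivation in that cited source: pass from the element sum to conjugacy-class data, factor Wall's centralizer order over $\sim$-self-dual polynomials and dual pairs $\{\varphi,\widetilde{\varphi}\}$ to obtain an Euler product, identify the dual-pair factor with a $\GL$ factor over $\F_{q^{2\deg\varphi}}$, and recover the self-dual factor from the $\GL$ one via the Ennola substitution $q\mapsto -q$, $z\mapsto -z$. One caution on your last step: verifying the signs ``for small partitions $\lambda$'' and then invoking ``the multiplicativity already established'' does not close the argument, since that multiplicativity is across distinct polynomials $\varphi$ and not across the parts of a fixed $\lambda$---but the direct comparison of Wall's unitary centralizer formula with $c_{\GL,\varphi,q}(\lambda)$ under $q^{\deg\varphi}\mapsto(-q)^{\deg\varphi}$, which you also propose, does settle it for arbitrary $\lambda$, the residual sign being $(-1)^{\sum_i(\lambda_i')^2}=(-1)^{|\lambda|}$ (as $\sum_i(\lambda_i')^2\equiv|\lambda|\pmod 2$ and $\deg\varphi$ is odd), which is exactly what $(-z)^{|\lambda|\deg\varphi}$ absorbs.
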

\begin{theorem}\label{th:Un-all-probability}
    Let $a_n$ denote the number of elements in $\U_n(q^2)$ which are $M$-th root of identity. Let 
    $M=t\cdot p^r$, where $p\nmid t$. Then the generating function for the probability $a_n/|\U_n(q^2)|$ is given by
    \begin{align*}
    1+\sum\limits_{n=1}^{\infty}\dfrac{a_n}{|\U_n(q^2)|}z^n
    =&\prod\limits_{m=1}^{\infty}\left(\prod\limits_{\substack{d|t\\d\in\widetilde{D}_m}}\left[1+\sum\limits_{n\geq 1}\sum\limits_{\substack{\lambda\vdash n\\\lambda_1\leq p^r}}
    \dfrac{(-z)^{n\cdot e(d)}}{(-q)^{e(d)\cdot(\sum_{i}(\lambda_i')^2)}\prod\limits_{i\geq 1}\left(\dfrac{1}{(-q)^{e(d)}}\right)_{m_i(\lambda_{\varphi})}}\right]^{\frac{\phi(d)}{e(d)}}
    \right.\\\times&\left.
    \prod\limits_{\substack{d|t\\ d\not\in\widetilde{D}_m}}\left[1+\sum\limits_{n\geq 1}\sum\limits_{\substack{\lambda\vdash n\\ \lambda_1\leq p^r}}
    \dfrac{z^{2n\cdot e(d)}}{q^{2e(d)\cdot(\sum_{i}(\lambda_i')^2)}\prod\limits_{i\geq 1}\left(\dfrac{1}{q^{2e(d)}}\right)_{m_i(\lambda_{\varphi})}}\right]^{\frac{\phi(d)}{2e(d)}}\right),
    \end{align*}
    where $e(d)$ denotes the multiplicative order of $q$ in $\Z/d\Z^\times$.
\end{theorem}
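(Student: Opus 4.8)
The plan is to run the argument of \cref{th:GL-all-probability} again, this time feeding the unitary classification into the unitary cycle index \cref{lem:cyc-ind-Un}. First I would invoke the Jordan decomposition $x=x_sx_u$ and recall, exactly as in the opening of \cref{sec:gen-fun}, that $x^M=\id$ in $\U_n(q^2)$ if and only if $x_s^t=\id$ and $x_u^{p^r}=\id$. By \cref{lem:gen-sem-Un} the first condition is equivalent to requiring that every $\sim$-irreducible factor of the characteristic polynomial of $x_s$ divide $Q_d$ for some $d\mid t$, while by \cref{prop:order-of-unipotent} the second is equivalent to asking that each part of every partition $\lambda_\varphi(x)$ be at most $p^r$. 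These are precisely the conditions deciding whether a given combinatorial datum contributes to the count $a_n$.

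I would then set $x_{\varphi,\lambda}=1$ in \cref{lem:cyc-ind-Un} exactly when $\varphi$ divides some $Q_d$ with $d\mid t$ and $\lambda_1\leq p^r$, and $x_{\varphi,\lambda}=0$ otherwise. The only remaining bookkeeping is to route each surviving polynomial to the correct factor of the cycle index, and this split is dictated by \cref{lem:tilde-factor}. When $d\in\D_m$ the polynomial $Q_d$ factors into $\phi(d)/e(d)$ distinct $\sim$-symmetric SCIM factors $\varphi=\widetilde\varphi$ of degree $e(d)$; each such $\varphi$ lives in the first product of \cref{lem:cyc-ind-Un}, contributing a bracket with $z$-exponent $n\cdot e(d)$ and centraliser built from $(-q)^{e(d)}$, the whole bracket raised to the power $\phi(d)/e(d)$. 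When $d\notin\D_m$ the factors occur in genuine conjugate pairs $\{g,\widetilde g\}$ with $g\neq\widetilde g$; each pair lives in the second product, the $z$-exponent doubles to $2n\cdot e(d)$, the centraliser is built from $q^{2e(d)}$, and the number of pairs is $\phi(d)/2e(d)$. Imposing $\lambda_1\leq p^r$ inside each bracket and taking the product over all $m\geq 1$ and $d\mid t$ then reproduces the stated formula.

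A feature worth flagging, in contrast with \cref{th:Sp-all-probability} and \cref{th:Or-all-probability}, is that no separate $o(t)$ prefactor is needed. In the symplectic and orthogonal settings the cycle index bundles the two linear polynomials $u\pm1$ into a single product with a common symplectic or orthogonal centraliser, and $o(t)$ records how many of $\pm1$ are admissible eigenvalues. Here the eigenvalues $1$ and $-1$ are separated as the factors $d=1$ and $d=2$ of the product over $d\mid t$ (both $u-1$ and $u+1$ being $\sim$-symmetric, with $1,2\in\D_0$ for $q$ odd), and their centralisers are honest unitary groups $\U_m(q^{2e(d)})$ as in \cref{cr:semsp-U-elements}; the divisibility constraint $d\mid t$ already discards $d=2$, hence $-\id$, whenever $t$ is odd.

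The step I expect to require the most care is the case division supplied by \cref{lem:tilde-factor}: one must check that a factor of $Q_d$ is $\sim$-symmetric exactly when $d\in\D_m$ for some $m$, and that in the complementary case the pairing $\{g,\widetilde g\}$ is arranged so that the $z$-exponent, the base field $q^{2e(d)}$, and the multiplicity $\phi(d)/2e(d)$ all emerge consistently. Everything else is the verbatim substitution already performed in \cref{th:GL-all-probability}, which I would not repeat.
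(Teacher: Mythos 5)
Your proposal is correct and takes essentially the same route as the paper, whose proof consists precisely of citing the semisimple analysis of \cref{cr:semsp-U-elements}, the unipotent condition of \cref{prop:order-of-unipotent}, and the substitution into the unitary cycle index \cref{lem:cyc-ind-Un}. Your expanded bookkeeping of the SCIM versus conjugate-pair cases via \cref{lem:tilde-factor}, and the observation that no $o(t)$ prefactor arises because $u\mp 1$ are absorbed into the $\sim$-symmetric product as the divisors $d=1,2$, are faithful elaborations of the substitution argument the paper carries out explicitly in \cref{th:GL-all-probability} and only sketches here.
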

\begin{proof}
    This follows from the proof of \cref{cr:semsp-U-elements} and \cref{prop:order-of-unipotent}. Finally, we need to use \cref{lem:cyc-ind-Un}.
\end{proof}
\section{An example}\label{sec:calc-prime}
{In this section, we compute the exact probility when $q\equiv -1\pmod{M}$}. All the groups 
are defined over $\fq$.
We concentrate mainly on the case when $M\not=2$ is a prime and $q\equiv-1\pmod{M}$, for example, $(q, M)=(41,7)$. Then
we get
\begin{align*}
    x^M-1=(x-1)\cdot Q_M(x).
\end{align*}
Using \cref{lem-cyclo-factor}, it will factor into $(M-1)/e(M)$ many distinct monic polynomials of degree $e(M)=2$.
In this case, the elements contributing to $M$-th root of identity will all be semisimple (see the discussion at the beginning of \cref{sec:semsim-uni}).
\subsection{The case of $\GL_n(q)$} Let $b_n$ denote the proportion of $M$-th roots of identity in $\GL_n(q)$. Then using \cref{cr:semsp-GL-elements}, we get that 
\begin{align*}
    1+\sum\limits_{n=1}^{\infty}b_nz^n&=\left(1+\sum\limits_{m=1}^{\infty}\dfrac{z^m}{|\GL_m(q)|}\right)\left(1+\sum\limits_{m=1}^{\infty}\dfrac{z^{2m}}{|\GL_m(q^2)|}\right)^{{(M-1)}/{2}}.
\end{align*}
We now divide the computation into two cases. The first is $n$ being odd. In this 
case, we should have an odd power of $z$, coming from the first term of the product. Other contributing powers of $z$ will have all even power. Hence probability of being an $M$-th root is
{
\begin{align*}
    \sum\limits_{\substack{1\leq j \leq M\\j=\text{odd}}}\dfrac{1}{|\GL_j(q)|}\cdot\left(\sum\limits_{\lambda\vdash \frac{M-j}{2}}\prod\limits_{\ell}\dfrac{1}{|\GL_{\lambda_{\ell}}(q^2)|}\right),
\end{align*}
where $\ell$ runs over the subscripts of the parts of $\lambda=(\lambda_1,\lambda_2,\ldots)$.
When $n$ is even, using the same argument as before, we get the resulting probability to be
\begin{align*}
    \sum\limits_{\substack{0\leq j \leq M\\j=\text{even}}}\dfrac{1}{|\GL_j(q)|}\cdot\left(\sum\limits_{\lambda\vdash \frac{M-j}{2}}\prod\limits_{\ell}\dfrac{1}{|\GL_{\lambda_{\ell}}(q^2)|}\right),
\end{align*}
where $\ell$ runs over the subscripts of the parts of $\lambda=(\lambda_1,\lambda_2,\ldots)$ and $|\GL_0(q)|$ is $1$ by convention.}
\subsection{The case of $\Sp_{2n}(q)$} Since $M$ is odd, we have that $o(M)=1$. We can have two possibilities, either $M\in D_m$ for some 
$m$, or $M\not\in D_m$ for all $m\geq 1$. If $M\in D_m$ for some $m$, then we have that the probability of an element of $\Sp_{2n}(q)$
to be $M$-th root, using \cref{cr:semsp-Sp-elements}, is
\begin{align*}
    \sum\limits_{j=1}^{n}\dfrac{1}{|\Sp_{2j}(q)|}\cdot \left( \sum\limits_{\lambda\in \p_{\frac{M-1}{2}}({M-j})}\prod\limits_{\ell}\dfrac{1}{|\U_{\lambda_\ell}(q^4)|}\right),
\end{align*}
and when $M\not\in D_m$, then the resulting probability will be
\begin{align*}
    \sum\limits_{j=1}^{n}\dfrac{1}{|\Sp_{2j}(q)|}\cdot \left( \sum\limits_{\lambda\in \p_{\frac{M-1}{4}}({M-j})}\prod\limits_{\ell}\dfrac{1}{|\GL_{\lambda_\ell}(q^2)|}\right).
\end{align*}

The cases for the orthogonal groups $\On{\epsilon}_n(q)$ and the unitary group $\U_n(q^2)$ are similar and we omit them from the display.
\printbibliography
\end{document}